\definecolor{darkblue}{rgb}{0.13,0.13,0.39}
\newtheorem{thm}{Theorem}
\newtheorem{lem}{Lemma}[section]
\newtheorem{prop}[lem]{Proposition}
\theoremstyle{definition}
\newtheorem{rem}[lem]{Remark}
\newtheorem*{rem*}{Remark}
\newcounter{assum}
\newcommand{\I}{{\rm i}}
\newcommand{\pp}{\mathbb{P}}
\newcommand{\ee}{\mathbb{E}}
\newcommand{\rr}{\mathbb{R}}
\newcommand{\zz}{\mathbb{Z}}
\newcommand{\aip}{\mathcal{A}_2}
\newcommand{\ch}{\mathcal{H}}
\newcommand{\cb}{\mathcal{B}}
\newcommand{\ct}{\mathcal{T}}
\newcommand{\cm}{\mathcal{M}}
\newcommand{\p}{\partial}
\newcommand{\uno}[1]{\mathbf{1}_{#1}}
\newcommand{\ep}{\varepsilon}
\newcommand{\vs}{\vspace{6pt}}
\newcommand{\wt}{\widetilde}
\DeclareMathOperator{\Ai}{Ai}
\DeclareMathOperator{\tr}{tr}
\newcommand{\gref}[1]{\ref*{g-#1} of \cite{cqr}}
\newcommand{\geqref}[1]{(\ref*{g-#1}) in \cite{cqr}}
\newcommand{\grefn}[1]{\ref*{g-#1}}
\newcommand{\geqrefn}[1]{(\ref*{g-#1})}
\DeclareMathOperator*{\argmax}{arg\,max}
\numberwithin{equation}{section}
\let\oldmarginpar\marginpar
\renewcommand\marginpar[1]{\-\oldmarginpar[\raggedleft\footnotesize #1]%
{\raggedright{\small\textsf{#1}}}}
\begin{document}

\title[Endpoint distribution of directed polymers]{Endpoint distribution of directed
  polymers in \texorpdfstring{$1+1$}{1+1} dimensions}

\author{Gregorio Moreno Flores} \address[G.~Moreno~F.]{
  Department of Mathematics\\
  University of Wisconsin\\
  480 Lincoln Drive\\
  Madison, Wisconsin 53706\\
  USA } \email{moreno@math.wisc.edu} \author{Jeremy Quastel} \address[J.~Quastel]{
  Department of Mathematics\\
  University of Toronto\\
  40 St. George Street\\
  Toronto, Ontario\\
  Canada M5S 2E4} \email{quastel@math.toronto.edu} \author{Daniel Remenik}
\address[D.~Remenik]{
  Department of Mathematics\\
  University of Toronto\\
  40 St. George Street\\
  Toronto, Ontario\\
  Canada M5S 2E4 \newline \indent\textup{and}\indent
  Departamento de Ingenier\'ia Matem\'atica\\
  Universidad de Chile\\
  Av. Blanco Encala\-da 2120\\
  Santiago\\
  Chile} \email{dremenik@math.toronto.edu}

\maketitle

\begin{abstract}
  We give an explicit formula for the joint density of the max and argmax of the Airy$_2$
  process minus a parabola.  The argmax has a universal distribution which governs the
  rescaled endpoint for large time or temperature of directed polymers in $1+1$
  dimensions.
\end{abstract}

\section{Introduction}

In geometric last passage percolation, one considers a family
$\big\{w(i,j)\}_{i,j\in\zz^+}$ of independent geometric random variables with parameter
$q$ (i.e. $\pp(w(i,j)=m)=q(1-q)^{m}$ for $m\geq0$) and lets $\Pi_n$ be the collection of
up-right paths of length $n$, that is, paths $\pi=(\pi_0,\dotsc,\pi_n)$ such that
$\pi_i-\pi_{i-1}\in\{(1,0),(0,1)\}$. The \emph{point-to-point last passage time} is
defined, for $m,n\in\zz^+$, by
\[L(m,n)=\max_{\pi\in\Pi_{m+n}:(0,0)\to(m,n)}\sum_{i=0}^{m+n}w(\pi(i)),\] where the
notation in the subscript in the maximum means all up-right paths connecting the origin to
$(m,n)$. Next one defines the process $t\mapsto H_n(t)$ by linearly interpolating the
values given by scaling $L(n,y)$ through the relation
\[L(n+y,n-y)=c_1n+c_2n^{1/3}H_n(c_3n^{-2/3}y),\] where the constants $c_i$ have explicit
expressions which depend only on $q$ and can be found in \cite{johansson}. The random
variables
\[\ct_n=\inf\big\{t\!:\sup_{s\leq t}H_n(s)=\sup_{s\in\rr}H_n(s)\big\}\]
then correspond to the location of the endpoint of the maximizing path with unconstrained
endpoint. \citet{johansson} showed that
\begin{equation}
  H_n(t) \to \aip(t)-t^2
\end{equation}
in distribution, in the topology of uniform convergence on compact sets, where $\aip$ is
the Airy${}_2$ process, which is a universal limiting spatial fluctuation process in such
models, and is defined through determinantal formulas for its finite-dimensional
distributions (see the companion paper \cite{cqr} for a description). Together with known
results for last passage percolation \cite{baikRains}, Johansson's result (see also
\cite{cqr}) implies that
\begin{equation}
  \pp\!\left(\sup_{t\in\rr}\{\aip(t)-t^2\}\leq m\right)=F_{\rm GOE}(4^{1/3}m),\label{eq:johGOE}
\end{equation}
where $F_{\rm GOE}$ is the Tracy-Widom largest eigenvalue distribution for the Gaussian
Orthogonal Ensemble (GOE) from random matrix theory \cite{tracyWidom2}.

Now let $\ct$ denote
the location at which the maximum is attained,
\[\ct = \argmax_{t\in \mathbb{R}} \{\aip(t)-t^2\}.\]
Together with the recent result of \citet{corwinHammond} that the supremum of
$\aip(t)-t^2$ is attained at a unique point, Theorem 1.6 of \cite{johansson} shows

\begin{thm}
  As $n\to \infty$, $\ct_n\to\ct$ in distribution.
\end{thm}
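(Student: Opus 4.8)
The plan is to combine Johansson's functional limit theorem for $H_n$ with the continuity of the $\argmax$ map at functions having a unique, finite maximizer; the only step carrying genuine probabilistic content is a tightness estimate ruling out that $\ct_n$ escapes to $\pm\infty$.

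First I would invoke Theorem 1.6 of \cite{johansson}: $H_n\to\aip(\cdot)-(\cdot)^2$ in distribution in $C(\rr)$ with the topology of uniform convergence on compact sets. Since that space is Polish, Skorokhod's representation theorem lets us pass to a probability space on which this convergence is almost sure, uniform on every bounded interval; write $\Psi(t)=\aip(t)-t^2$ for the limit. By \citet{corwinHammond}, almost surely $\Psi$ attains its supremum at a single point, which is exactly $\ct$; in particular $\ct$ is finite and $\Psi(t)\to-\infty$ as $|t|\to\infty$ almost surely, a property needed anyway for the supremum to be attained.

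The crucial ingredient is tightness of $(\ct_n)$: for every $\delta>0$ there are $M<\infty$ and $n_0$ with $\pp(|\ct_n|>M)<\delta$ for all $n\geq n_0$. This follows from a parabolic upper bound for $H_n$ that is uniform in $n$: the moderate‑deviation upper‑tail estimates for point‑to‑point last passage times in the geometric model (already present in \cite{johansson}, see also \cite{baikRains}) give constants $c,C>0$ such that, with probability at least $1-\delta$ uniformly in $n$, $H_n(t)\leq C-c\,t^2$ for all $t$ in the domain of $H_n$ (the piecewise‑linear interpolation in the definition of $H_n$ affecting this only by $o(1)$), while $\sup_s H_n(s)\geq -C$ with probability at least $1-\delta$ uniformly in $n$, for instance because $\sup_s H_n(s)$ converges in distribution to the $F_{\rm GOE}$ law of \eqref{eq:johGOE}. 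On the intersection of these two events, any maximizer $t$ of $H_n$ satisfies $C-c\,t^2\geq -C$, hence $|t|\leq\sqrt{2C/c}=:M$, and in particular $|\ct_n|\leq M$. I regard this as the main obstacle in the sense that it is the one place where soft arguments do not suffice — uniform‑on‑compacts convergence says nothing about the tails of $H_n$, so some uniform‑in‑$n$ estimate is unavoidable — although for the present theorem it is essentially quotable, being part of Johansson's analysis of the transversal location.

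Finally I would assemble the pieces on the Skorokhod space and show $\ct_n\to\ct$ in probability there, which gives convergence in distribution. Let $A$ be the almost sure event on which $H_n\to\Psi$ uniformly on compacts and $\Psi$ has a \emph{unique} finite maximizer $\ct$. A pathwise continuity‑of‑argmax argument shows that on $A\cap\{|\ct|\leq M\}$ there is, for each $\eta>0$, a path‑dependent $\theta>0$ with the property that $\|H_n-\Psi\|_{\infty,[-M,M]}<\theta$ together with $|\ct_n|\leq M$ force $|\ct_n-\ct|<\eta$ (compare $H_n(\ct_n)\geq H_n(\ct)$ with the values $H_n(s)$ for $s$ near $\ct$, using uniqueness of the maximizer). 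Hence, choosing $M,n_0$ as in the tightness step and also so that $\pp(|\ct|>M)<\delta$, for $n\geq n_0$ one has
\[
\pp\big(|\ct_n-\ct|>\eta\big)\ \leq\ \pp(|\ct_n|>M)+\pp(|\ct|>M)+\pp\big(A\cap\{|\ct|\leq M\},\ \|H_n-\Psi\|_{\infty,[-M,M]}\geq\theta\big),
\]
and the last probability tends to $0$ as $n\to\infty$, since on $A\cap\{|\ct|\leq M\}$ one has $\|H_n-\Psi\|_{\infty,[-M,M]}\to0$ and $\theta>0$. Letting $n\to\infty$ and then $\delta\downarrow0$ gives $\ct_n\to\ct$ in probability, hence in distribution. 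One last remark: $\ct_n$ is defined as the leftmost global maximizer of $H_n$ (which it is, $H_n$ being continuous on a compact interval), but this selection is immaterial in the limit since the maximizer of $\Psi$ is unique.
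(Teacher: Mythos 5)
Your argument is correct and is essentially the same as the paper's: the paper offers no proof beyond citing Johansson's Theorem 1.6 (which is exactly the functional-limit-plus-argmax-continuity argument, with the tightness of the maximizer location coming from Johansson's transversal fluctuation estimates) together with the Corwin--Hammond uniqueness result. Your write-up is a faithful reconstruction of the content of that cited theorem, with the key non-soft ingredient (the uniform-in-$n$ parabolic bound forcing tightness of $\ct_n$) correctly identified and correctly attributed.
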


In this article we complete the picture by providing an {\it explicit formula for the
  distribution of $\ct$}. Let $\cm$ denote the maximum of the Airy${}_2$ process minus a
parabola
\begin{equation}
  \cm = \max_{t\in \mathbb{R}} \{\aip(t)-t^2\}.
\end{equation}
Our main result is in fact an explicit formula \eqref{eq:eff} for the joint density $f(t,m)$ of
$\ct$ and $\cm$.

In the derivation of the formula, we will assume the result of \citet{corwinHammond} that
the maximum of $\aip(t)-t^2$ is obtained at a unique point.  However, we point out that it
is {\it not} necessary to do this.  In fact, if one follows the argument without this
assumption, one ends up with a formula for what is in principle a super-probability
density, i.e. a non-negative function $f(t,m)$ on $\mathbb{R}\times\mathbb{R}$ with
$\int_{\mathbb{R}\times\mathbb{R}}dm\,dt\, f(t,m)\ge 1$, and in fact one can see from the
argument that
\begin{equation}\label{eq:numberMax}
  \int_{\mathbb{R}\times\mathbb{R}}dm\,dt\, f(t,m) = {\rm expected~number~of ~maxima~of ~}\aip(t)-t^2.
\end{equation}
Recall that from \eqref{eq:johGOE} that the distribution of $\cm$ is given by a scaled
version of $F_{\rm GOE}$.  A non-trivial computation (see Section \ref{sec:uniq}) on the
resulting $f(t,m)$ gives
\begin{equation}
  \int_{-\infty}^\infty dt\, f(t,m) = 4^{1/3}F'_\mathrm{GOE}(4^{1/3}m).
\end{equation}
This shows that the resulting $f(t,m)$ has total integral one, which can only be true if
the maximum is unique almost surely.  Thus we provide an independent proof of the
uniqueness of the maximum of $\aip(t)-t^2$.

Now we state the formula.  Let $B_m$ be the integral operator with kernel
\begin{equation}
  \label{eq:Bc}
  B_m(x,y)=\Ai(x+y+m).
\end{equation}
Recall that \citet{ferrariSpohn} showed that $F_\mathrm{GOE}$ can be expressed as the
determinant
\begin{equation}
  F_\mathrm{GOE}(m)=\det(I-P_0B_mP_0),\label{eq:GOE}
\end{equation}
where $P_a$ denotes the projection onto the interval $[a,\infty)$ (the formula
  essentially goes back to \cite{sasamoto}). Here, and in everything that follows, the
determinant means the Fredholm determinant in the Hilbert space $L^2(\rr)$. In particular,
note that since $F_\mathrm{GOE}(m)>0$ for all $m\in\rr$, \eqref{eq:GOE} implies that
$I-P_0B_mP_0$ is invertible. We will write
\begin{equation}\label{eq:res}
  \varrho_m(x,y)=(I-P_0B_mP_0)^{-1}(x,y).
\end{equation}
Also, for $t,m\in\rr$ define the function
\begin{equation}
  \label{eq:phi}
  \psi_{t,m}(x)=2e^{xt}\left[t\Ai(x+m+t^2)+\Ai'(x+m+t^2)\right]
\end{equation}
and the kernel
\[\Psi_{t,m}(x,y)=2^{1/3}\psi_{t,m}(2^{1/3}x)\psi_{-t,m}(2^{1/3}y).\]
Finally, let
\begin{equation}
  \label{eq:gamma}
  \gamma(t,m)=2^{1/3}\int_0^\infty dx\int_0^\infty dy\,\psi_{-t,4^{-1/3}m}(2^{1/3}x)\varrho_{m}(x,y)\psi_{t,4^{-1/3}m}(2^{1/3}y).
\end{equation}
 
\begin{thm}\label{thm:fPE}
  The joint density $f(t,m)$ of $\ct$ and $\cm$ is given by
  \begin{equation}\label{eq:eff}
    \begin{split}
      f(t,m)&=\gamma(t,4^{1/3}m)F_\mathrm{GOE}(4^{1/3}m)\\
      &=\det\!\big(I-P_0B_{4^{1/3}m}P_0+P_0\Psi_{t,m}P_0\big)-F_\mathrm{GOE}(4^{1/3}m).
    \end{split}
  \end{equation}
\end{thm}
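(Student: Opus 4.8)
The plan is to write the joint distribution function $\pp(\ct\le\tau,\cm\le m)$ in terms of a ``continuum statistics'' probability for the Airy$_2$ process minus a parabola, and then to recover $f(\tau,m)$ (defined by $\pp(\ct\le\tau,\cm\le m)=\int_{-\infty}^\tau\!\int_{-\infty}^m f$) by differentiating the Fredholm determinant that the companion paper \cite{cqr} provides for that probability.

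\emph{Step 1: reduction to a two-level barrier.} For $\tau\in\rr$ put $M_1^\tau=\sup_{s\le\tau}\{\aip(s)-s^2\}$ and $M_2^\tau=\sup_{s>\tau}\{\aip(s)-s^2\}$; both suprema are a.s.\ attained since $\aip(s)-s^2\to-\infty$. Because the global maximum is a.s.\ unique (Corwin--Hammond), $\{\ct>\tau\}=\{M_2^\tau>M_1^\tau\}$ up to a null set, whence $\pp(\ct\le\tau,\cm\le m)=\pp(\cm\le m)-\pp(M_1^\tau<M_2^\tau\le m)$. Let $\Phi(a,b,\tau)=\pp(M_1^\tau\le a,\,M_2^\tau\le b)$; this is exactly the probability that $\aip(s)$ stays below the barrier $a+s^2$ for $s\le\tau$ and $b+s^2$ for $s>\tau$. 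Then $\pp(M_1^\tau<M_2^\tau\le m)=\int_{-\infty}^m[\p_b\Phi(a,b,\tau)]_{a=b}\,db$ (using $\pp(M_1^\tau=M_2^\tau)=0$), so once the smoothness of $\Phi$ in $(a,b,\tau)$ is in hand one gets
\[f(\tau,m)=-\,\p_\tau\Big[\,\p_b\Phi(a,b,\tau)\big|_{a=b=m}\Big].\]

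\emph{Step 2: Fredholm formula for $\Phi$ and the two differentiations.} By the path-integral/continuum-statistics formula of \cite{cqr}, applied on $[-L,L]$ and passed to $L\to\infty$ using the decay of $\aip(s)-s^2$, one has $\Phi(a,b,\tau)=\det(I-\mathcal K_{a,b,\tau})$ on $L^2(\rr)$, where $\mathcal K_{a,b,\tau}$ is assembled from the Airy kernel and the Brownian hitting operators for the barrier levels $a,b$, conjugated by the Airy Hamiltonian semigroup $e^{-sH}$ with $H=-\p_x^2+x$; this is a smooth family of trace-class kernels, which furnishes the smoothness of $\Phi$ used above. At $a=b=m$ the barrier is flat and the determinant collapses, via Ferrari--Spohn, to $\det(I-P_0B_{4^{1/3}m}P_0)=F_\mathrm{GOE}(4^{1/3}m)$, so $\varrho_{4^{1/3}m}$ in \eqref{eq:res} is well defined. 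The functions $e^{xt}\Ai(x+c+t^2)$ in \eqref{eq:phi} are precisely what the parabolic barrier produces under these conjugations: $v(t,x)=e^{xt+ct+\frac23t^3}\Ai(x+c+t^2)$ solves the heat equation $\p_t v=\p_x^2 v$, so these are heat flows of the Airy function and $\psi_{t,c}$ is proportional to $\p_x v(t,\cdot)$. Now apply $\p_b\log\det(I-\mathcal K)=-\tr((I-\mathcal K)^{-1}\p_b\mathcal K)$: only the $s>\tau$ part of $\mathcal K_{a,b,\tau}$ depends on $b$, so this yields a rank-one insertion localized at the barrier. Specializing to $a=b=m$ and then applying $\p_\tau$ — which differentiates the position of the now-vanishing jump of the barrier — converts the boundary contributions to the left and to the right of $\tau$ into $\psi_{-t,4^{-1/3}m}$ and $\psi_{t,4^{-1/3}m}$. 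Collecting terms, pulling out $F_\mathrm{GOE}(4^{1/3}m)$ as an overall factor with $\varrho_{4^{1/3}m}$ the resolvent, and tracking the dilation relating $H$ on $L^2(\rr)$ to $B_m$ on $L^2([0,\infty))$ (the $2^{1/3}$'s) together with the parabola normalization of \eqref{eq:johGOE} (the $4^{1/3}$'s), gives $f(\tau,m)=\gamma(\tau,4^{1/3}m)F_\mathrm{GOE}(4^{1/3}m)$ with $\gamma$ as in \eqref{eq:gamma}. Run without the Corwin--Hammond input, the identical computation produces a nonnegative $f$ with $\int f\ge 1$, as discussed around \eqref{eq:numberMax}.

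\emph{Step 3: the determinantal form, and the main obstacle.} Because $\Psi_{t,m}$ is rank one — it factors as $2^{1/3}\psi_{t,m}(2^{1/3}\cdot)\otimes\psi_{-t,m}(2^{1/3}\cdot)$ — so is $P_0\Psi_{t,m}P_0$, and hence $\det(I-P_0B_{4^{1/3}m}P_0+P_0\Psi_{t,m}P_0)=\det(I-P_0B_{4^{1/3}m}P_0)\,\big(1+\tr(\varrho_{4^{1/3}m}P_0\Psi_{t,m}P_0)\big)$; the trace here is exactly $\gamma(t,4^{1/3}m)$ by \eqref{eq:gamma} and the symmetry of $\varrho_{4^{1/3}m}$, which gives the second line of \eqref{eq:eff}. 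The main obstacle is Step 2: extracting a sufficiently explicit form of $\mathcal K_{a,b,\tau}$ from the path-integral formula and then carrying out the mixed derivative $\p_\tau\p_b$ at the flat-barrier point so that every term stays trace class, all exchanges of limit/derivative/trace are legitimate, and the $2^{1/3}$ and $4^{1/3}$ factors are bookkept correctly; by comparison the $L\to\infty$ passage and the regularity of $(M_1^\tau,M_2^\tau)$ are routine, though they still require justification.
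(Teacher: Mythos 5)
Your setup is a legitimate variant of the paper's, and the engine is the same: both arguments reduce $f(t,m)$ to a mixed second derivative of barrier probabilities $\pp(\aip(s)\le s^2+\text{barrier})$ computed via the continuum-statistics Fredholm determinant of \cite{cqr}, followed by the rank-one identity $\det(I-A+B)=\det(I-A)(1+\tr((I-A)^{-1}B))$. The difference is organizational: you differentiate the joint CDF through the two-level barrier $\Phi(a,b,\tau)$ (first $\p_b$ at $a=b$, then $\p_\tau$), whereas the paper works with the density directly, sandwiching the event $\{\cm_L\in[m,m+\ep],\ct_L\in[t,t+\delta]\}$ between barrier events for $h_{\ep,\delta}(s)=s^2+m+\ep\uno{s\in[t,t+\delta]}$ and taking $\ep\to0$ then $\delta\to0$. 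Your version is conceptually cleaner (no box events), at the price of having to justify joint smoothness of $\Phi$ in $(a,b,\tau)$ and the exchange of the two derivatives with the trace; the paper's version trades this for the trace-norm/Hilbert--Schmidt convergence lemmas (Lemmas \ref{lem:morefredholm}, \ref{lem:derDet}, \ref{lem:deriv}) controlling the $\ep$ and $\delta$ limits.

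Two concrete issues. First, your claim that $\p_b\mathcal K_{a,b,\tau}$ ``yields a rank-one insertion localized at the barrier'' is not right as stated: differentiating the absorbing operator $\Theta^{g_b}_{[\tau,L]}$ in the barrier height $b$ produces an \emph{integral over} $s\in(\tau,L)$ of boundary-localized rank-one pieces, which is not rank one; the localization to a single time, and hence the rank-one structure of $\Psi_{t,m}$, only appears after the subsequent $\p_\tau$ kills all but the $s=\tau$ endpoint term. Your argument survives this correction, but the order of operations matters and should be made explicit. Second, what you defer as ``the main obstacle'' is in fact the bulk of the proof: one needs the explicit Gaussian reflection kernel \eqref{eq:thetagc} for $\Theta^{g_m}_{[\ell,r]}$, the contour-integral computation \eqref{eq:contour} that converts $e^{LH}K_{\Ai}$ applied to the boundary derivative into $A\bar P_0\widetilde\psi_{t,m}$ with $\widetilde\psi_{t,m}$ as in \eqref{eq:tpsi} (remarkably $L$-independent), the decay estimate \eqref{elltoo} discarding the $P_{m+L^2}$ piece, the $L\to\infty$ convergence \eqref{eq:GOEcvg} to $I-A\bar P_0\hat R^1\bar P_0A^*$, and the conjugations by the reflection $\sigma$ and dilation $S$ that produce $B_{4^{1/3}m}$ and the $2^{1/3},4^{1/3}$ factors in \eqref{eq:gamma}. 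None of that is routine, and your correct identification of $\psi_{t,c}$ as $\p_x$ of a heat flow of the Airy function, while a good sign, does not substitute for it.
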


Integrating over $m$ one obtains a formula for the probability density $f_{\rm
  end}(t)$ of $\ct$. Unfortunately, it does not appear that the resulting integral
can be calculated explicitly, so the best formula one has is
\begin{equation}
  f_{\rm end}(t)= \int_{-\infty}^\infty dm\,f(t,m).
\end{equation}
One can readily check nevertheless that $f_{\rm end}(t)$ is symmetric in $t$. In
\cite{quastelRemTails} it is shown that the tails decay like $e^{-ct^3}$. Figure
\ref{fig:contour} shows a contour plot of the joint density of $\cm$ and $\ct$, while
Figure \ref{fig:density} shows a plot of the marginal $\ct$ density. The numerical
computations of Fredholm determinants used to produce these plots are based on the
numerical scheme and Matlab toolbox developed by F. Bornemann in
\cite{bornemann2,bornemann1}.

Although one only has the rigorous result in the case of geometric (or exponential) last
passage percolation, the key point is that the {\it polymer endpoint density} $f_{\rm end}(t)$ is expected
to be {\it universal} for directed polymers in random environment in $1+1$ dimensions,
 and even more broadly in the
KPZ universality class, for example in particle models such as asymmetric attractive interacting
particle systems (e.g. the asymmetric exclusion process), where second class particles
play the role of polymer paths.  And the analogous picture is expected to hold, as we now describe.

In the directed polymer models we consider a family $\big\{w(i,j)\}_{i\in\zz^+,j\in\zz}$ of independent identically
distributed random variables and the probability measure (polymer measure)
${\mathrm{\textbf{P}}}^w_{n,\beta}$ on the set $\Pi_n$ of one-dimensional nearest-neighbor
random walks of length $n$ starting at $0$ given by
\begin{equation} {\mathrm{\textbf{P}}}^w_{n,\beta}(\pi)= \frac{e^{\beta
      \sum_{i=0}^nw(i,\pi(i))}}{ \sum_{\pi\in\Pi_n} e^{\beta \sum_{i=0}^nw(i,\pi(i))}},
\end{equation}
where $\beta>0$ is the \emph{inverse temperature}. The analogue of $ \ct_n$ in
this context is $\pi(n)$, the random position of the endpoint.  The last passage
percolation case corresponds to $\beta =\infty$.  The infinite temperature case $\beta=0$
is nothing but a free random walk.  For $\beta<\infty$ the endpoint is random even given
the random environment $\big\{w(i,j)\}_{i\in\zz^+,j\in\zz}$. Still one expects in great
generality, and for any $\beta>0$, to have
\begin{equation}
  cn^{-2/3} \pi(n)\xrightarrow{\rm distr} f_{\rm end}
\end{equation}
for an appropriate $c$. The conjecture is that this holds whenever $E[w_+^5]<\infty$, and
fails otherwise due to the appearance of special large values of $w$ which attract the
polymer.  However, few results are available at finite temperature.  The first model for
which any results were obtained (for the free energy) is the continuum random polymer (see
below). There are now two other models, the semi-discrete model of O'Connell-Yor
\cite{OConnellYor}, and the log-Gamma polymer \cite{seppPolymerBoundary,cocsz}, for which results about
asymptotic fluctuations of the free energy are becoming available.

In the context of the continuum random polymer, we have continuous paths $x(s)$, $0<s<t$,
starting at $0$ at time $0$, with quenched random energy
\begin{eqnarray}
  & \mathcal{H}(x(\cdot)) = \int_0^t \{|\dot x(s)|^2 -\xi(s,x(s)) \}
  ds, &\label{hetch}
\end{eqnarray} where $\xi$ is Gaussian space-time white noise, that is,
$\langle\dot \xi(t,x), \dot \xi(s,y)\rangle = \delta(t-s)\delta(y-x)$. Through a
mollification procedure \cite{akq} one can  construct a probability measure $P^\xi_t$ on the space of 
continuous paths corresponding to the formal weights $e^{-\beta  \mathcal{H}}$.  It has 
finite dimensional distributions $P^\xi ( x(t_1) \in dx_1,\ldots, x(t_n) \in dx_n, x(t)
\in dx)$,  $0<t_1<\dots<t_n<t$, given by
\begin{equation}
  \frac{ Z(0,0, t_1,x_1)\cdots Z(t_{n-1},x_{n-1}, t_n, x_n) Z( t_n, x_n, t,x)}{\int dy\, Z(0,0,t,y)} dx_1\cdots dx_n\,dx
\end{equation}
where $Z(s,y,t,x)$ is the solution of the stochastic heat equation with multiplicative
noise
\[\partial_t Z=\beta^{-1}\partial_x^2 Z + \beta\xi Z\]
on $(s,t]$ with initial data $Z(s,y,s,x)= \delta(x-y)$. The temperature can be related to
time as $t\sim\beta^4$, so through a time rescaling we can set $\beta=1$ without loss of
generality.

In this setting the endpoint distribution is 
\begin{equation}
  P^\xi_t (x(t) \in dx) = \frac{ Z(0,0, t,x)}{\int dy\, Z(0,0,t,y)} dx.
\end{equation} 
Writing
\begin{equation}\label{eq:zed}
  Z(0,0, t,x) = \tfrac{1}{\sqrt{4\pi t}} e^{ -\tfrac{x^2}{4t} + (4t)^{1/3}A_t((4t)^{-2/3} x)+\tfrac{t}{24} }
\end{equation}
the key prediction (see Conjecture 1.5 of \cite{acq}) is that, as $t\to \infty$, the {\it
  crossover process} $A_t$ converges to the Airy${}_2$ process,
\begin{equation}
  A_t( x) \to \aip(x).
\end{equation}
This is proved in the sense of one dimensional marginals in \cite{acq,sasamSpohn}, and a
non-rigorous computation for multidimensional distributions was made in
\cite{ProlhacSpohn}.  Calling $\tilde{x} =(4 t)^{-2/3} x$ we can rewrite the exponent in
\eqref{eq:zed} as $(4t)^{1/3} \{ A_t (\tilde{x}) -\tilde{x}^2\}+\frac{t}{24}$, from which
we conclude that the endpoint of the polymer at time $t$ has approximately the
distribution $(4t)^{2/3} \ct$ for large $t$.  The partition functions of discrete directed
polymer models satisfy discrete versions of the stochastic heat equation, and analogous
results are expected to hold in that setting as well.

\begin{figure}
  \centering
  \includegraphics[width=4.5in]{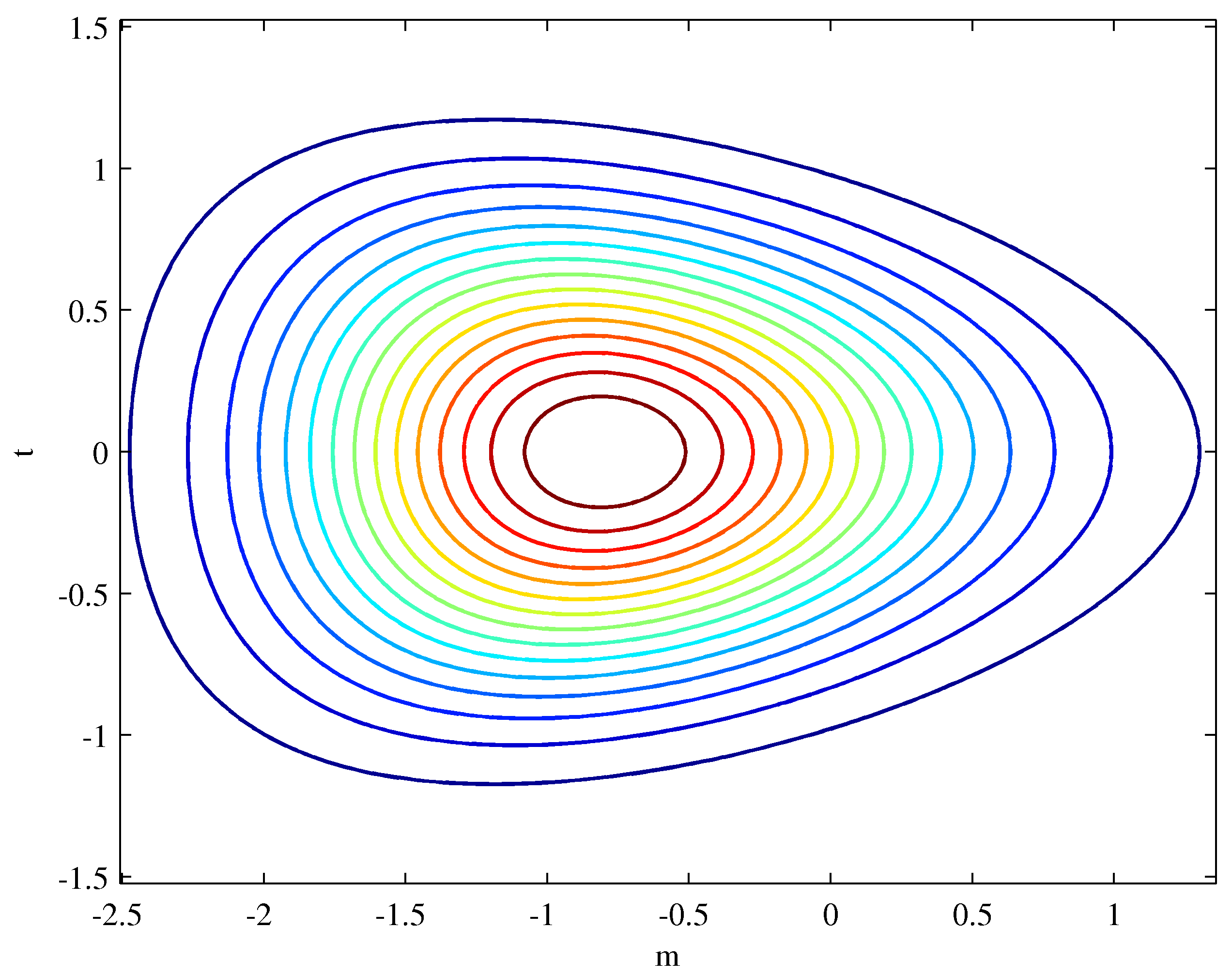}
  \vspace{-14pt}
  \caption{Contour plot of the joint density of $\cm$ and $\ct$.}
  \label{fig:contour}
\end{figure}

The problem has attracted interested in the physics literature for quite some time (see
for example \cite{mezardParisi,halpZhang}). Recently there has been a
resurgence of interest.  In particular, an alternate way to obtain the Airy${}_2$ process
is as a limit in large $N$ of the top path in a system of $N$ non-intersecting random
walks, or Brownian motions, the so called vicious walkers \cite{fisher}. \citet{SMCR},
\citet{feierl2} and \citet{RS1,RS2} obtain various expressions for the joint distributions
of $\cm$ and $\ct$ in such a system at finite $N$.  \citet{forrester} obtain the $F_{\rm
  GOE}$ distribution from large $N$ asymptotics non-rigorously, and furthermore make
connections between these problems and Yang-Mills theory. Unfortunately, the formulas
obtained for $\ct$ at finite $N$ have not been amenable to asymptotic analysis.

\begin{figure}
  \centering \hspace{-0.0in}\includegraphics[width=6in]{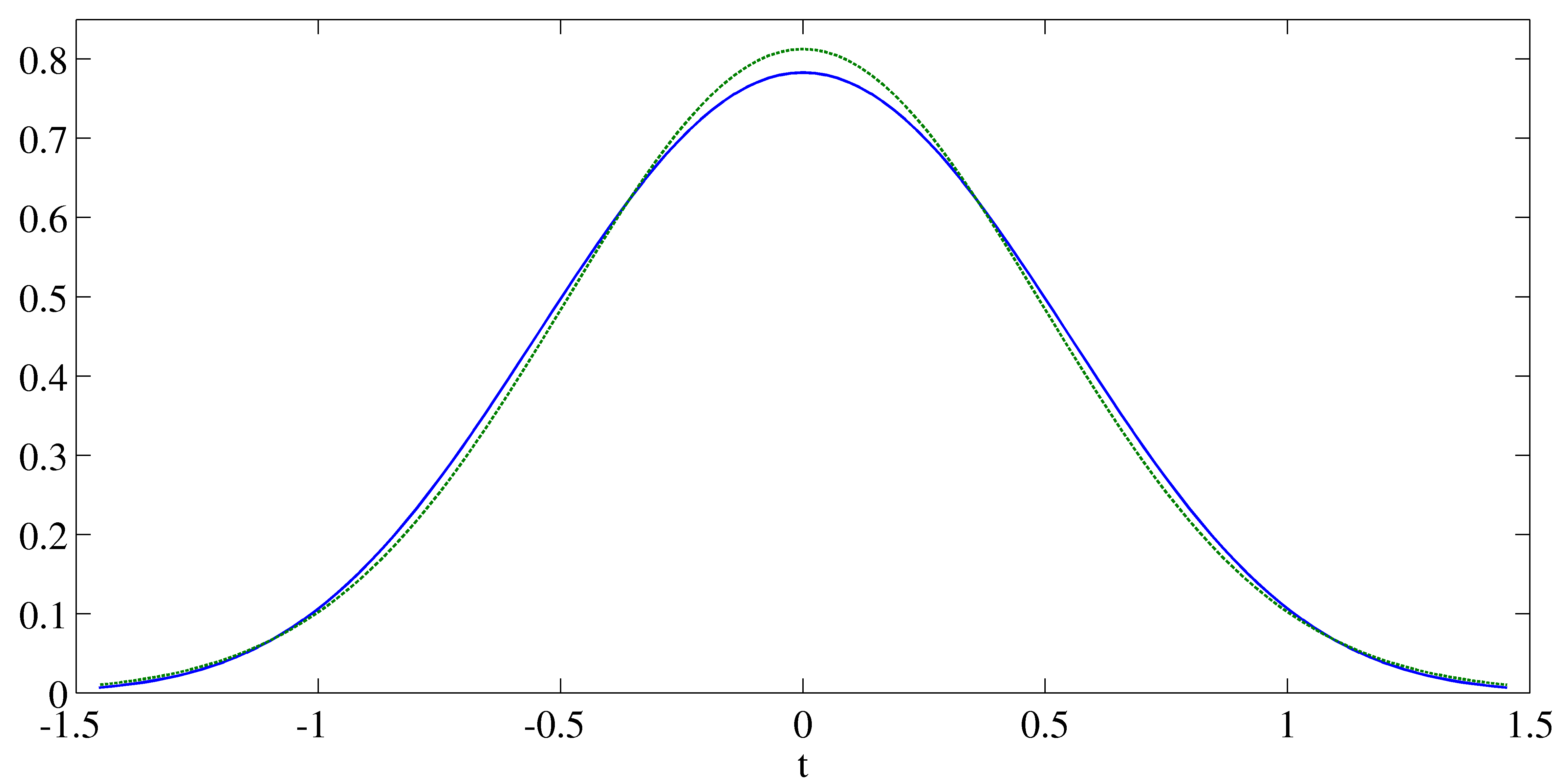} \vspace{-14pt}
  \caption{Plot of the density of $\ct$ compared with a Gaussian density with the same
    variance 0.2409 (dashed line). The excess kurtosis $\ee(\ct^4)/\ee(\ct^2)^2-3$ is
    $-0.2374$.}
  \label{fig:density}
\end{figure}

\vs

\paragraph{\bf Acknowledgements}
JQ and DR were supported by the Natural Science and Engineering Research Council of
Canada, and DR was supported by a Fields-Ontario Postdoctoral Fellowship. GMF was
supported by a postdoctoral position at the Fields Institute.  The authors thank Ivan
Corwin, Victor Dotsenko and Konstantine Khanin for interesting and helpful discussions,
and Kurt Johansson for several references to the physics literature. They also thank
Folkmar Bornemann for providing the Matlab toolbox on which numerical computations were
based. This work was done during the Fields Institute program ``Dynamics and Transport in
Disordered Systems" and the authors would like to thank the Fields Institute for its
hospitality. JQ would like in particular to acknowledge that the idea to work on this problem
arose out of discussions with Victor Dotsenko during his visit to the Fields Institute.

\section{Derivation of the formula}\label{sec:deriv}


Let $(\cm_L,\ct_L)$ denote the maximum and the location of the maximum of $\aip(t)-t^2$
restricted to $t\in[-L,L]$, and let $f_L$ be the joint density of $(\cm_L,\ct_L)$.  We
first note that, by results of I. Corwin and A. Hammond \cite{corwinHammond}, the joint
density $f(m,t)$ of $\cm, \ct$ is well approximated by $f_L(m,t)$,
\begin{equation}\label{eq:densCv}
  f(t,m) =\lim_{L\to \infty} f_L(t,m).
\end{equation}
By definition,
\[f_L(t,m) = \lim_{\delta\to0}\lim_{\ep\to0}\frac1{\ep\delta}\pp\!\left(\cm_L \in
  [m,m+\ep],\, \ct_L\in [t,t+\delta]\right),\] provided that the limit exists. The main
contribution in the above expression comes from paths entering the space-time box
$[t,t+\delta]\times[m,m+\ep]$ and staying below the level $m$ outside the time interval
$[t,t+\delta]$. More precisely, if we denote by $\underline D_{\ep,\delta}$ and $\overline
D_{\ep,\delta}$ the sets
\begin{equation}
\begin{aligned}
  \underline{D}_{\ep,\delta} &= \Big\{\aip(s)-s^2\leq m,\,\,\,
  s\in[t,t+\delta]^\text{c},\,\aip(s)-s^2\leq m+\ep,\,\,\,s\in[t,t+\delta], \\
  &\hspace{2.6in}\aip(s)-s^2 \in [m,m+\ep]\text{ for some }s\in [t,t+\delta]\Big\},\\
  \shortintertext{and} \overline{D}_{\ep,\delta}& = \Big\{\aip(s)-s^2\leq m+\ep,\,\,\,
  s\in[-L,L],\,\aip(s)-s^2 \in [m,m+\ep]\text{ for some }s\in [t,t+\delta]\Big\},
\end{aligned}
\end{equation}
then
\begin{equation}
  \underline D_{\ep,\delta}\subseteq\left\{\cm_L \in
    [m,m+\ep],\, \ct_L\in [t,t+\delta]\right\}\subseteq\overline D_{\ep,\delta}.
\end{equation}
Letting $\underline
f(t,m)=\lim_{\delta\to0}\lim_{\ep\to0}\frac1{\ep\delta}\pp\big(\underline
D_{\ep,\delta}\big)$ and defining $\overline f(t,m)$ analogously (with $\overline
D_{\ep,\delta}$ instead of $\underline D_{\ep,\delta}$) we deduce that $\underline
f(t,m)\leq f(t,m)\leq\overline f(t,m)$. In what follows we will compute $\underline
f(t,m)$. It will be clear from the argument that for $\overline f(t,m)$ we get the same
limit, so we will only compute $\underline f(t,m)$. The conclusion is that
\[f_L(t,m)=\lim_{\delta\to0}\lim_{\ep\to0}\frac1{\ep\delta}\pp\big(\underline
D_{\ep,\delta}\big).\]

We rewrite this last equation as
\begin{equation}\label{first}
  f_L(t,m)= \lim_{\delta\to0}\lim_{\ep\to0}\frac1{\ep\delta}\Big[\,\pp\!\left(\aip(s)\leq
    h_{\ep,\delta}(s),\,s\in[-L,L]\right)-\pp\!\left(\aip(s)\leq
    h_{0,\delta}(s),\,s\in[-L,L]\right)\Big],
\end{equation}
where
\[h_{\ep,\delta}(s)=s^2+m+\ep\uno{s\in[t,t+\delta]}.\] Our method is based on precise
computation of the two probabilities.  We recall the formula in Theorem \gref{thm:aiL} for
the probability that $\aip(t)\leq g(t)$ on a finite interval. Introduce the operator
$\Theta^g_{[\ell,r]}$ which acts on $L^2(\rr)$ as follows:
$\Theta^g_{[\ell,r]}f(\cdot)=u(r,\cdot)$, where $u(r,\cdot)$ is the solution at time $r$
of the boundary value problem
\begin{equation}
  \begin{aligned}
    \p_tu+Hu&=0\quad\text{for }x<g(t), ~t\in (\ell,r)\\
    u(\ell,x)&=f(x)\uno{x<g(\ell)}\\
    u(t,x)&=0\quad\text{for }x\ge g(t)
  \end{aligned}\label{eq:bdval}
\end{equation} for the \emph{Airy Hamiltonian},
\[H=-\p_x^2+x.\] In \cite{cqr} it is shown that this operator describes the height
statistics of the Airy\texorpdfstring{${}_2$}{2} process,
\begin{equation}
  \pp\!\left(\aip(t)\leq g(t)\text{ for }t\in[\ell,r]\right)
  =\det\!\left(I-K_{\Ai}+e^{-\ell H}K_{\Ai}\Theta^g_{[\ell,r]}e^{rH}K_{\Ai}\right),
  \label{eq:aiL}
\end{equation}
where we have used the cyclic property of determinants as in \geqref{eq:basiccyclic}. We
use \eqref{eq:aiL} to rewrite \eqref{first} as
\begin{align}
  f_L(t,m) &= \lim_{\delta\to0}\lim_{\ep\to0}\frac1{\ep\delta} \left[
    \det\!\left(I-K_{\Ai}+e^{LH}K_{\Ai}\Theta^{h_{\ep,\delta}}_{[-L,L]}e^{LH}K_{\Ai}\right) \right.\\
  &
  \hspace{6.5cm}\left.-\det\!\left(I-K_{\Ai}+e^{LH}K_{\Ai}\Theta^{h_{0,\delta}}_{[-L,L]}e^{LH}K_{\Ai}\right)\right].
\end{align}
The limit in $\ep$ becomes a derivative
\begin{align}
  f_L(t,m) &=\lim_{\delta\to0}\frac{1}{\delta}\,\p_\beta
  \!\left.\det\!\left(I-K_{\Ai}+e^{LH}K_{\Ai}\Theta^{h_{\beta,\delta}}_{[-L,L]}e^{LH}K_{\Ai}\right)\right|_{\beta=0},
\end{align}
which in turn gives a trace,
\begin{multline}\label{grd}
  f_L(t,m)=\det\!\left(I-K_{\Ai}+e^{LH}K_{\Ai}\Theta^{h_{0,\delta}}_{[-L,L]}e^{LH}K_{\Ai}\right)\\
  \cdot\lim_{\delta\to0}\frac{1}{\delta}
  \tr\!\left[(I-K_{\Ai}+e^{LH}K_{\Ai}\Theta^{h_{0,\delta}}_{[-L,L]}e^{LH}K_{\Ai})^{-1}
    e^{LH}K_{\Ai}\left[\p_\beta\Theta^{h_{\beta,\delta}}_{[-L,L]}\right]_{\beta=0}e^{LH}K_{\Ai}\right]
\end{multline}
(see Lemma \ref{lem:derDet} and Remark \ref{rem:lemmas}). Note that $h_{0,\delta}=g_m$,
where $g_m$ is the parabolic barrier
\begin{equation}
  \label{eq:gc}
  g_m(s)=s^2+m,
\end{equation}
so in particular the determinant and the first factor inside the trace do not depend on
$\delta$.  From \geqrefn{eq:omega} and Theorem \grefn{thm:goe} from \cite{cqr} we have
\begin{equation} \label{eq:GOEcvg} \lim_{L\to\infty}\left(
    I-K_{\Ai}+e^{LH}K_{\Ai}\Theta^{h_{0,\delta}}_{[-L,L]}e^{LH}K_{\Ai}\right)=I-A\bar
  P_0\hat R^1\bar P_0A^*
\end{equation}
in trace norm, where $\bar P_a=I-P_a$ denotes the projection onto the interval
$(-\infty,a]$,
\begin{equation}
  \hat R^1(\lambda,\tilde\lambda)=2^{-1/3}\Ai(2^{-1/3}(2m-\lambda-\tilde\lambda),\label{eq:wtR1}
\end{equation}
and the \emph{Airy transform}, $A$, acts on $f\in L^2(\rr)$ as
\[Af(x)=\int_{-\infty}^\infty dz\,\Ai(x-z)f(z).\] In particular, \geqref{eq:goe} implies
that
\begin{equation}\label{eq:detCv}
  \lim_{L\to\infty} \det\!\left(I-K_{\Ai}+e^{LH}K_{\Ai}\Theta^{h_{0,\delta}}_{[-L,L]}e^{LH}K_{\Ai}\right) = F_\mathrm{GOE}(4^{1/3}m).
\end{equation}

The next step is to compute $\p_\beta\Theta^{h_{\beta,\delta}}_{[-L,L]}\!\mid_{\beta=0}$.
Recalling that $h_{0,\delta}(s)=g_m(s)= s^2+m$ and also $h_{\ep,\delta}(s)=g_{m+\ep}(s)$
for $s\in[t,t+\delta]$ we have, by the semigroup property,
\[\Theta^{h_{\ep,\delta}}_{[-L,L]}-\Theta^{h_{0,\delta}}_{[-L,L]}
=\Theta^{g_m}_{[-L,t]}\left[\Theta^{g_{m+\ep}}_{[t,t+\delta]}
  -\Theta^{g_m}_{[t,t+\delta]}\right]\Theta^{g_m}_{[t+\delta,L]}.\] We now use Theorem
\gref{thm:thetaLgen} and a minor variation of \geqref{eq:thetaL} to obtain that
$\Theta^{g_m}_{[\ell,r]}$ has explicit integral kernel
\begin{equation}\label{eq:thetagc}
  \Theta^{g_m}_{[\ell,r]}(x,y)=\frac{e^{\ell x-ry+(r^3-\ell^3)/3}}{\sqrt{4\pi(r-\ell)}}
  \left[e^{-\frac{(x-\ell^2-y+r^2)^2}{4(r-\ell)}}-e^{-\frac{(x-\ell^2+y-r^2-2m)^2}{4(r-\ell)}}\right]
  \uno{x\leq m+\ell^2}\uno{y\leq m+r^2}.
\end{equation}
For convenience we introduce the kernels
$\vartheta_1(x,z)=e^{tz}\wt\Theta^{h_{0,0}}_{[-L,t]}(x,z)\uno{x\leq m+L^2}$ and
$\vartheta_2(\tilde z,y)=e^{-t\tilde z}\wt\Theta^{h_{0,0}}_{[t+\delta,L]}(\tilde
z,y)\uno{y\leq m+L^2}$, where $\wt\Theta^{h_{0,0}}_{[\ell,r]}$ is defined as in
\eqref{eq:thetagc} but with the indicator functions replaced by 1. Let
\begin{multline}\label{eq:defLambda}
  \Lambda^{\ep,\delta}_L(x,y)=\frac{1}{\sqrt{4\pi\delta}}e^{[(t+\delta)^3-t^3]/3}
  \int_{-\infty}^{m+t^2}dz\int_{-\infty}^{m+(t+\delta)^2}\!\!d\tilde z\,\vartheta_1(x,z)\\
  \cdot\left[ e^{-(z-t^2+\tilde z-(t+\delta)^2-2m)^2/(4\delta)}-e^{-(z-t^2+\tilde z
      -(t+\delta)^2-2m-2\ep)^2/(4\delta)} \right]\!\vartheta_2(\tilde z,y),
\end{multline}
which corresponds to $\Theta^{h_{\ep,\delta}}_{[-L,L]}-\Theta^{h_{0,\delta}}_{[-L,L]}$ but
without shifting $m$ by $\ep$ in the indicator functions in \eqref{eq:thetagc} for the
first operator in this difference. We will show in Lemma \ref{lem:deriv} that
\begin{equation}
  \label{eq:errLambda}
  \lim_{\ep\to0}\frac{1}{\ep}\left[\left(\Theta^{h_{\ep,\delta}}_{[-L,L]}-\Theta^{h_{0,\delta}}_{[-L,L]}\right)
    -\Lambda^{\ep,\delta}_L\right]=0
\end{equation}
in Hilbert-Schmidt norm. On the other hand, performing in \eqref{eq:defLambda} first the
change of variables $z\mapsto z+m+t^2$, $\tilde{z}\mapsto \tilde{z}+m+(t+\delta)^2$, then
a scaling of $z$ and $\tilde{z}$ by $\sqrt{\delta}$, and then the change of variables
$-u=z+\tilde z$, $-v=z-\tilde z$, we get
\begin{multline}
  \Lambda^{\ep,\delta}_L(x,y) =\frac{e^{[(t+\delta)^3-t^3]/3}}{4\sqrt{\pi}}\int^{\infty}_0
  du \int^u_{-u} dv\,
  \vartheta_1(x,-\sqrt\delta(u+v)/2+m+t^2)\\
  \sqrt\delta\left[ e^{-u^2/4}-e^{-(u+2\ep/\sqrt\delta )^2/4}
  \right]\vartheta_2(\sqrt\delta(v-u)/2+m+(t+\delta)^2,y),
\end{multline}
From this form and \eqref{eq:errLambda} it is straightforward to see that
\begin{multline}\label{etoo}
  \lim_{\ep\to0}
  \frac1\ep\bigg[\Theta^{h_{\ep,\delta}}_{[-L,L]}-\Theta^{h_{0,\delta}}_{[-L,L]}\bigg](x,y)
  =\lim_{\ep\to0} \frac1\ep\Lambda^{\ep,\delta}_{L}(x,y)\\
  =\frac{1}{4\sqrt{\pi}}\int^{\infty}_0 du \int^u_{-u} dv \, u \, e^{-u ^2/4}
  \,\vartheta_1(x,-\sqrt\delta(u+v)/2+m+t^2)\vartheta_2(\sqrt\delta(v-u)/2+m+(t+\delta)^2,y).
\end{multline}
The limit holds in Hilbert-Schmidt norm, as will be shown in Lemma \ref{lem:deriv}.  Now
we take the limit in $\delta$ and obtain
\begin{equation}\lim_{\delta\to
    0}\frac{1}{\delta}\left[\p_\beta\Theta^{h_{\beta,\delta}}_{[-L,L]}\right]_{\beta=0}(x,y)
  =\partial_w \vartheta_1(x,w)|_{w=m+t^2} \, \partial_w
  \vartheta_2(w,y)|_{w=m+t^2},\label{eq:grtpre}
\end{equation}
again in Hilbert-Schmidt norm, which will be checked in Lemma \ref{lem:deriv}. Referring
back to \eqref{grd} we have now shown that
\begin{equation}\label{eq:grt}
  \lim_{\delta\to
    0}\frac{1}{\delta}e^{LH}K_{\Ai}\left[\p_\beta\Theta^{h_{\beta,\delta}}_{[-L,L]}\right]_{\beta=0}
  e^{LH}K_{\Ai}=\wt\Psi_L,
\end{equation}
where $\wt\Psi_L$ has kernel
\[\wt\Psi_L(x,y)=\wt\Psi^1_L(x)\wt\Psi^2_L(y)\]
with
\begin{equation}
  \begin{aligned}
    \wt\Psi^1_L(x)&=\left.\p_w\big(e^{LH}K_{\Ai}\wt\Theta^{g_m}_{[-L,t]}M_t(x,w)\big)\right|_{w=m+t^2},\\
    \wt\Psi^2_L(y)&=\left.\p_w\big(M_{-t}\wt\Theta^{g_m}_{[t,L]}e^{LH}K_{\Ai}(w,y)\big)\right|_{w=m+t^2},
  \end{aligned}\label{eq:wtPsiL}
\end{equation}
and $M_t$ is the multiplication operator given by $M_tf(x)=e^{tx}f(x)$.

Putting \eqref{eq:densCv}, \eqref{grd}, \eqref{eq:detCv} and \eqref{eq:grt} together and
using Lemma \ref{lem:morefredholm}(a) we have
\begin{equation}
  \label{eq:fL}
  f(t,m)=\lim_{L\to\infty}\tr\!\left[(I-K_{\Ai}+e^{LH}K_{\Ai}\Theta^{g_m}_{[-L,L]}e^{LH}K_{\Ai})^{-1}\wt\Psi_L\right]F_\mathrm{GOE}(4^{1/3}m).
\end{equation}
We now have to compute the limit of the trace.  We begin by using \eqref{eq:thetagc} to
compute
\[\varphi(z):=
\p_w\big(\wt\Theta^{g_0}_{[-L,t]}M_t(z,w)\big)\Big|_{w=m+t^2}
=\frac{e^{-Lz+L^3/3+t^3/3}}{2\sqrt{\pi}(L+t)^{3/2}} (z-m-L^2)\,e^{-(z-m-L^2)^2/4(L+t)}.\]
Note how the derivative of the two terms inside the bracket in \eqref{eq:thetagc}
evaluated at $w=m+t^2$ are equal. From \eqref{eq:wtPsiL} we get
\begin{equation}
  \label{eq:dec1}
  \wt\Psi^1_L(x)=e^{LH}K_{\Ai}\bar P_{m+L^2}\,\varphi(x)
  =e^{LH}K_{\Ai}\,\varphi(x)-e^{LH}K_{\Ai}P_{m+L^2}\,\varphi(x),
\end{equation}
In Appendix \ref{sec:extras} we will show that
\begin{equation}\label{elltoo}
  \lim_{L\to\infty}\left\|e^{LH}K_{\Ai}P_{m+L^2}\,\varphi\right\|_{L^2(\rr)}=0.
\end{equation}
Now we compute $e^{LH}K_{\Ai}\varphi$. We write it as
\begin{equation}\label{eq:eLHK}
  e^{LH}K_{\Ai}\,\varphi(x) =\int_{-\infty}^0d\lambda\,e^{\lambda
    L}\Ai(x-\lambda)\int_{-\infty}^\infty dz\, \Ai(z-\lambda)\,\varphi(z).
\end{equation}
To compute the
$z$ integral, which we denote by $I(\lambda)$, we use the contour integral representation
of the Airy function given by
\begin{equation}
  \Ai(x)=\frac{1}{2\pi\I}\int_\Gamma du\,e^{u^3/3-ux},\label{eq:contour}
\end{equation}
with $\Gamma=\{c+\I s\!:s\in\rr\}$ and $c$ any positive real number, to write
\[
I(\lambda)=\frac{1}{2\pi\I}\int_\Gamma du\int_{-\infty}^\infty
dz\,e^{u^3/3-u(z+m+L^2-\lambda)}
\frac{e^{-L(z+m+L^2)+L^3/3+t^3/3}}{2\sqrt{\pi}(L+t)^{3/2}} \, e^{-z^2/4(L+t)} \, z,
\]
where we have shifted the variable $z$ by $m+L^2$. Note that the integral in $z$ is of the
form $\int_{-\infty}^\infty dz\,e^{-a_1z^2-a_2z-a_3}z$, which corresponds to computing the
mean of a certain Gaussian random variable. Performing the integration we get
\[
I(\lambda)=-\frac{2}{2\pi\I}\int_\Gamma
du\,e^{u^3/3+(L+t)u^2+(L^2+2Lt-m+\lambda)u-Lm+L^3/3+L^2 t+t^3/3}(L+u).
\]
Introducing the change of variables $u=v-L-t$ we get
\[
I(\lambda)= -\frac{2}{2\pi\I}\,e^{mt+t^3-(L+t)\lambda}\int_{\Gamma'}
dv\,e^{v^3/3-(m+t^2-\lambda)v}(v-t),
\]
where $\Gamma'$ corresponds to a shift of $\Gamma$ along the real axis. Using
\eqref{eq:contour} we deduce that
\[
I(\lambda)=2\,e^{mt+t^3-(L+t)\lambda}\Big[\!\Ai'(m+t^2-\lambda)+t\Ai(m+t^2-\lambda)\Big].
\]
Therefore
\[
e^{LH}K_{\Ai}\varphi(x) =2\int_{-\infty}^0 d\lambda\,e^{t^3+(m-\lambda)
  t}\Ai(x-\lambda)\Big[\Ai'(m+t^2-\lambda)+t\Ai(m+t^2-\lambda)\Big].
\]
We will rewrite this identity as
\[e^{LH}K_{\Ai}\varphi(x)=A\bar P_0\widetilde\psi_{t,m}(x),\] where
\begin{equation}
  \label{eq:tpsi}
  \widetilde\psi_{t,m}(x)=2e^{t^3+(m-x)t}\bigg[\Ai'(m+t^2-x)+t\Ai(m+t^2-x)\bigg].  
\end{equation}
Remarkably, the result does not depend on $L$.  Note that $A\bar
P_0\widetilde\psi_{t,m}\in L^2(\rr)$, which can be checked using the Plancherel formula
for the Airy transform
\begin{equation}
\int(Af)^2=\int\!  f^2\label{eq:planch}
\end{equation}
and the fact that
$|\!\Ai(u)|\vee|\!\Ai'(u)|\leq Ce^{-\frac23u^{3/2}}$ for some $C>0$ and all $u>0$ (see
(10.4.59-60) in \cite{abrSteg}).

Now we look at $\wt\Psi^2_L(y)$. By the time symmetry and time homogeneity of the heat
kernel it is clear that
$\p_w\big(M_{-t}\wt\Theta^{g_m}_{[t,L]}(w,\cdot)\big)(y)\big|_{w=m+t^2}$ can be obtained
from the above calculation by starting at $y$ and running backwards in time from $L$ to
$t$. Observe that the length of this time interval is $L-t$, whereas the one in the above
calculation had length $L+t$. Moreover, here we are multiplying the boundary value
operator by $M_{-t}$, whereas before we multiplied by $M_t$. It is not difficult then to
see that the answer for the second factor should be the same as for the first one, only
with $x$ replaced by $y$ and $t$ by $-t$.  From this, \eqref{eq:wtPsiL} and \eqref{elltoo}
we get that
\begin{equation}\label{eq:formwPsiL}
  \widetilde\Psi_L(x,y)\xrightarrow[L\to\infty]{}\widetilde\Psi(x,y)
  :=A\bar P_0\widetilde\psi_{t,m}(x)A\bar P_0\widetilde\psi_{-t,m}(y)
\end{equation}
in Hilbert-Schmidt sense, and thus from \eqref{eq:GOEcvg} and Lemma
\ref{lem:morefredholm}(b) we have that
\begin{equation}\label{eq:inv}
  (I-K_{\Ai}+e^{LH}K_{\Ai}\Theta^{h_{0,\delta}}_{[-L,L]}e^{LH}K_{\Ai})^{-1}\widetilde\Psi_L
  \xrightarrow[L\to\infty]{}(I-A\bar P_0\hat R^1\bar P_0A^*)^{-1}\widetilde\Psi
\end{equation}
in trace norm (the product converges in trace norm thanks to Lemma
\gref{lem:fredholm}). Therefore by Lemma \ref{lem:morefredholm}(a),
\begin{multline}
  \lim_{L\to
    \infty}\tr\!\Big[(I-K_{\Ai}+e^{LH}K_{\Ai}\Theta^{h_{0,\delta}}_{[-L,L]}e^{LH}K_{\Ai})^{-1}
  \wt\Psi_L\Big]\\
  =\tr\!\left[ (I-A\bar P_0\hat R^1\bar P_0A^*)^{-1}\widetilde\Psi\right] =\left\langle
    (I-A\bar P_0\hat R^1\bar P_0 A^*)^{-1}A\bar P_0\widetilde\psi_{t,m},A \bar
    P_0\widetilde\psi_{-t,m}\right\rangle_{L^2(\rr)},
\end{multline}
where $\langle\cdot,\cdot\rangle_\ch$ denotes inner product in the Hilbert space $\ch$
(with $\ch=L^2(\rr)$ if the subscript is omitted).
    
It only remains to simplify the expression. We use the \emph{reflection operator} $\sigma
f(x)=f(-x)$.  Because $(A\sigma)^{-1}=A\sigma$, $\sigma^2=I$ and $A^*=\sigma A\sigma$, we
have
\begin{equation}
  \left\langle (I-A\bar P_0\hat R^1\bar P_0 A^*)^{-1}A\bar P_0\widetilde\psi_{t,m},A
    \bar P_0\widetilde\psi_{-t,m}\right\rangle=\left\langle A\sigma(I-\sigma\bar
    P_0\hat R^1\bar P_0 \sigma)^{-1}\sigma\bar P_0\widetilde\psi_{t,m},A \bar
    P_0\widetilde\psi_{-t,m}\right\rangle.
\end{equation}
Since $(A\sigma)^*=A\sigma$ and $A\sigma A=\sigma$, this last term can be rewritten as
\begin{equation}
  \begin{aligned}
    \left\langle(I-\sigma\bar P_0\hat R^1\bar P_0 \sigma)^{-1}\sigma\bar
      P_0\widetilde\psi_{t,m}, (A\sigma)^*A\bar P_0\widetilde\psi_{-t,m}\right\rangle
    &=\left\langle(I-\sigma\bar P_0\hat R^1\bar P_0 \sigma)^{-1}\sigma\bar
      P_0\widetilde\psi_{t,m},\sigma\bar P_0\widetilde\psi_{-t,m}\right\rangle\\
    &\hspace{-0.4in}=\left\langle(I-P_0\sigma\hat R^1\sigma P_0)^{-1}\sigma
      \widetilde\psi_{t,m},\sigma\widetilde\psi_{-t,m}\right\rangle_{L^2([0,\infty))},
  \end{aligned}
\end{equation}
where in the second equality we used the trivial fact that $P_0\sigma=\sigma\bar P_0$ and
$\sigma P_0=\bar P_0\sigma$.  Observing that
$\sigma\widetilde\psi_{t,m}(x)=e^{t^3+mt}\psi_{t,m}(x)$, where $\psi_{t,m}$ was defined in
\eqref{eq:phi}, we deduce that
\begin{equation}
  \label{eq:trace}
  \left\langle (I-A\bar P_0\hat R^1\bar P_0 A^*)^{-1}A\bar P_0\widetilde\psi_{t,m},A
    \bar P_0\widetilde\psi_{-t,m}\right\rangle_{L^2(\rr)}=\left\langle(I-P_0\sigma\hat R^1\sigma P_0)^{-1}
    \psi_{t,m},\psi_{-t,m}\right\rangle_{L^2([0,\infty))}.
\end{equation}

Now we use the scaling operator $Sf(x)=f(2^{1/3}x)$. One can check easily that
$S^{-1}=2^{1/3}S^*$ and that $P_0$ commutes with $S$ and $S^{-1}$. Since $\sigma\hat
R^1\sigma(x,y)=2^{-1/3}\Ai(2^{-1/3}(x+y)+4^{1/3}m)$, we also have
\[S\sigma\hat R^1\sigma S^{-1}=B_{4^{1/3}m},\] where this last kernel was defined in
\eqref{eq:Bc}. Thus writing $\tilde m=2^{-1/3}m$ we get
\begin{align}
  \Big\langle(I-P_0\sigma\hat R^1\sigma
  P_0)^{-1}\psi_{t,m},\psi_{-t,m}\Big\rangle_{L^2([0,\infty))}
  &=\left\langle(I-S^{-1}P_0B_{2\tilde m}P_0S)^{-1}\psi_{t,m},\psi_{-t,m}\right\rangle_{L^2([0,\infty))}\\
  &=\left\langle
    S^{-1}(I-P_0B_{2\tilde m}P_0)^{-1}S\psi_{t,m},\psi_{-t,m}\right\rangle_{L^2([0,\infty))}\\
  &=2^{1/3}\left\langle(I-P_0B_{2\tilde
      m}P_0)^{-1}S\psi_{t,m},S\psi_{-t,m}\right\rangle_{L^2([0,\infty))},
\end{align} which is equal to $2^{1/3}\gamma(t,4^{1/3}m)$.  This gives our first formula
for $f(t,m)$ in \eqref{eq:eff}. Now observe that $\gamma(t,4^{1/3}m)$ equals the trace of
the operator $(I-P_0B_{4^{1/3}m}P_0)^{-1}P_0\Psi_{t,m}P_0$ and that $\Psi_{t,m}$ is a rank
one operator. The second equality in \eqref{eq:eff} now follows that from the general fact
that for two operators $A$ and $B$ such that $B$ is rank one, one has
$\det(I-A+B)=\det(I-A)\big[1+\tr\big((I-A)^{-1}B\big)\big]$.

\section{\texorpdfstring{$\cm$}{M} marginal and uniqueness of the maximizer}
\label{sec:uniq}

As we mentioned in the introduction, \citet{corwinHammond} showed that the maximum of
$\aip(t)-t^2$ is attained at a unique point $t\in\rr$, providing a proof of a conjecture
by K. Johansson (Conjecture 1.5 in \cite{johansson}). We used their result in Section
\ref{sec:deriv} to write formulas for $f(t,m)$ in terms of certain events concerning the
Airy$_2$ process.

Alternatively, one can turn the reasoning around and use our formula to give a different
proof of Johansson's conjecture. If we do not assume the uniqueness of the maximizer, then
the derivation in Section \ref{sec:deriv} leads to a density $f(t,m)$ for the event that
there is a maximizer at $t$ (and height $m$). Therefore the uniqueness of the maximizer is
equivalent to
\begin{equation}
  \int_{\rr^2}dt\,dm\,f(t,m)=1.\label{eq:mass}
\end{equation}
This, in turn, is a direct consequence of the following


\begin{prop}\label{prop:GOEmarginal}
  For any $m\in\rr$,
  \[\int_{-\infty}^\infty dt\,f(t,m)=\frac{d}{dm}F_{\rm GOE}(4^{1/3}m).\]
\end{prop}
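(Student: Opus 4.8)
The plan is to integrate the second form of \eqref{eq:eff} over $t$ and recognize the answer as a logarithmic derivative of $F_\mathrm{GOE}(4^{1/3}m)=\det(I-P_0B_{4^{1/3}m}P_0)$. Recall from the end of Section~\ref{sec:deriv} that $f(t,m)=\gamma(t,4^{1/3}m)\,F_\mathrm{GOE}(4^{1/3}m)$ with $\gamma(t,4^{1/3}m)=\tr\!\big[(I-P_0B_{4^{1/3}m}P_0)^{-1}P_0\Psi_{t,m}P_0\big]$, and that $\Psi_{t,m}$ is the rank-one kernel $2^{1/3}\psi_{t,m}(2^{1/3}x)\psi_{-t,m}(2^{1/3}y)$. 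Writing $\Psi_{t,m}$ as the outer product of $x\mapsto 2^{1/6}\psi_{t,m}(2^{1/3}x)$ and $y\mapsto 2^{1/6}\psi_{-t,m}(2^{1/3}y)$ and using that the resolvent $(I-P_0B_{4^{1/3}m}P_0)^{-1}$ is bounded, one may bring $\int_\rr dt$ inside the trace as soon as $\int_\rr dt\,\|\psi_{\pm t,m}(2^{1/3}\cdot)\|_{L^2([0,\infty))}^2<\infty$; this follows easily from the bound $|\Ai(u)|\vee|\Ai'(u)|\le Ce^{-\frac23u^{3/2}}$, which beats the prefactors $e^{\pm2^{1/3}xt}$ once the Airy argument $2^{1/3}x+m+t^2$ is large. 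The problem thus reduces to computing $\Xi_m(x,y):=\int_\rr dt\,\Psi_{t,m}(x,y)$, after which $\int_\rr dt\,f(t,m)=F_\mathrm{GOE}(4^{1/3}m)\,\tr\!\big[(I-P_0B_{4^{1/3}m}P_0)^{-1}P_0\Xi_mP_0\big]$.

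The heart of the matter is the identity $\Xi_m(x,y)=-4^{1/3}\Ai'(x+y+4^{1/3}m)$, i.e.\ $P_0\Xi_mP_0=-\partial_m\big(P_0B_{4^{1/3}m}P_0\big)$ (recall \eqref{eq:Bc}). To prove it I would substitute the contour representation \eqref{eq:contour} into the two Airy functions in $\psi_{t,m}$, so that $t\Ai(x+m+t^2)+\Ai'(x+m+t^2)=\frac{1}{2\pi\I}\int_\Gamma du\,(t-u)\,e^{u^3/3-u(x+m+t^2)}$, and likewise for the $\psi_{-t,m}$ factor with a second variable $w$ on a parallel vertical line. The integral over $t$ in $\Xi_m$ then becomes the Gaussian moment $\int_\rr dt\,(t-u)(-t-w)\,e^{-(u+w)t^2+2^{1/3}(x-y)t}$, and after the change of variables $p=u+w$, $r=u-w$ (so that $r$ traverses a vertical line when the contours for $u$ and $w$ share a real part) the remaining $r$-integral is again Gaussian. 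Two cancellations now make the formula collapse: the Gaussian factor in $x-y$ thrown off by the $t$-integral is exactly undone by completing the square in $r$, so the dependence on $x-y$ disappears entirely; and the half-integer powers of $p$ from the two Gaussian integrations combine to an integer power, leaving $\Xi_m(x,y)=c\int_\Gamma dp\,p\,e^{p^3/12-\frac12 p(2^{1/3}(x+y)+2m)}$ for an explicit constant $c$ — a function of $x+y$ only, as it must be by the symmetry of the integrand under $(t,x,y)\mapsto(-t,y,x)$. Rescaling $p=4^{1/3}\sigma$ absorbs the cubic term and produces a multiple of $\int_\Gamma d\sigma\,\sigma\,e^{\sigma^3/3-\sigma(x+y+4^{1/3}m)}=-2\pi\I\,\Ai'(x+y+4^{1/3}m)$ (again \eqref{eq:contour}); tracking the Gaussian normalizations pins the coefficient to $-4^{1/3}$.

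Granting the identity, note $\partial_m\big(P_0B_{4^{1/3}m}P_0\big)=4^{1/3}P_0\Ai'(\cdot+\cdot+4^{1/3}m)P_0=-P_0\Xi_mP_0$, so by the standard formula $\frac{d}{dm}\det(I-A(m))=\det(I-A(m))\,\tr\!\big[(I-A(m))^{-1}(-A'(m))\big]$ applied to $A(m)=P_0B_{4^{1/3}m}P_0$ and by \eqref{eq:GOE},
\[
  \frac{d}{dm}F_\mathrm{GOE}(4^{1/3}m)=F_\mathrm{GOE}(4^{1/3}m)\,\tr\!\big[(I-P_0B_{4^{1/3}m}P_0)^{-1}P_0\Xi_mP_0\big]=\int_\rr dt\,f(t,m),
\]
the last equality by the reduction above. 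This is the proposition. Integrating once more in $m$ and using \eqref{eq:johGOE} then gives $\int_{\rr^2}f(t,m)\,dt\,dm=1$, i.e.\ \eqref{eq:mass}, which is exactly the almost sure uniqueness of the maximizer of $\aip(t)-t^2$.

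I expect the main obstacle to lie in the bookkeeping for $\Xi_m$: choosing the contours for $u$ and $w$ (vertical lines with a common real part, so that all the Gaussian integrals — in $t$, in $r$, and the final one — converge), justifying the Fubini interchanges among the $t$-, $u$- and $w$-integrals in the presence of the growing factors $e^{\pm2^{1/3}xt}$, $e^{-u(x+m)}$ and $e^{-w(y+m)}$, and keeping the branch of $p^{1/2}$ consistent through the two Gaussian integrations so that the cancellations of the $x-y$ dependence and of the half-integer powers of $p$ are transparent. The interchange of $\int_\rr dt$ with the trace, and the differentiation of the Fredholm determinant, are routine once the decay estimate for $\psi_{\pm t,m}$ is in hand.
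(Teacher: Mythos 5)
Your proposal is correct and follows essentially the same route as the paper: reduce to the $t$-integral of the rank-one kernel $\Psi_{t,m}$, evaluate it via the contour representation of the Airy function and two Gaussian integrations (your common-real-part contours versus the paper's $\Gamma_{2a},\Gamma_a$ followed by $z=u+v$, $w=u-v$ and $w\mapsto\I w$ are cosmetically different versions of the same computation), identify the result as $-\partial_m\big(P_0B_{4^{1/3}m}P_0\big)$, and conclude with the logarithmic derivative of the Fredholm determinant (Lemma \ref{lem:derDet}). The constant $-4^{1/3}$ in your identity for $\Xi_m$ matches the paper's $\Psi_m(x,y)=-2^{1/3}\Ai'(x+y+4^{1/3}m)$ once the $2^{1/3}$ prefactor in $\Psi_{t,m}$ is accounted for.
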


\begin{proof}
  From the formula \eqref{eq:eff} for $f(t,m)$ we see that we need to compute
  \[\Psi_{m}(x,y)=\int_{-\infty}^\infty dt\,\psi_{-t,m}(\tilde x)\psi_{t,m}(\tilde y),\]
  where $\tilde x=2^{1/3}x$ and $\tilde y=2^{1/3}y$. Let
  $\Gamma_a=\{a+\I s\!:s\in\rr\}$. Then fixing $a>0$ and using \eqref{eq:contour} we have
  \[\Psi_{m}(x,y)=\frac{4}{(4\pi\I)^2}\int_{\Gamma_{2a}\times\Gamma_a}du\,dv\int_{-\infty}^\infty dt\,
  (u-t)(v+t)e^{u^3/3+v^3/3-u({\tilde x}+m+t^2)-v({\tilde y}+m+t^2)+t({\tilde x}-{\tilde
      y})}.\] The $t$ integral is just a Gaussian integral and gives
  \[\Psi_{m}(x,y)=\frac{\sqrt{\pi}}{(4\pi\I)^2}\int_{\Gamma_{2a}\times\Gamma_a}du\,dv\,
  (u+v)^{-5/2}p_{{\tilde x},{\tilde y}}(u,v)e^{q_{{\tilde x},{\tilde y}}(u,v)},\] where
  \begin{align}
    p_{{\tilde x},{\tilde y}}(u,v)&=4 u^3 v + 4 u v^3 + 8 u^2 v^2-2( u+v) + 2 (u^2 - v^2)
    ({\tilde x} - {\tilde y}) - ({\tilde x} -
    {\tilde y})^2\\
    \shortintertext{and} q_{{\tilde x},{\tilde y}}(u,v)&=\frac{\frac13(u^4 +v^4 + u^3 v +
      u v^3) - m (u + v)^2 - u^2 {\tilde x} - v^2 {\tilde y} + \frac14 ({\tilde x} -
      {\tilde y})^2 - u v ({\tilde x} + {\tilde y})}{u + v}.
  \end{align}
  Introducing the change of variables $z=u+v$, $w=u-v$, we get
  \[\Psi_{m}(x,y)=\frac{-\sqrt{\pi}}{(4\pi\I)^2}\frac12\int_{\Gamma_{3a}}dz\int_{\Gamma_a}dw\, z^{-5/2}\tilde p_{{\tilde
      x},{\tilde y}}(z,w)e^{\tilde q_{{\tilde x},{\tilde y}}(z,w)},\] where
  \begin{align}
    \tilde p_{{\tilde x},{\tilde y}}(z,w)&= - w^2 z^2 + 2wz ({\tilde x} - {\tilde
      y})-({\tilde x}-{\tilde y})^2-2 z+z^4 \shortintertext{and} \tilde q_{{\tilde
        x},{\tilde y}}(z,w)&=\frac{w^2 z^2- 2 w z({\tilde x} - {\tilde y})+ ({\tilde x} -
      {\tilde y})^2 - 2 ({\tilde x} + {\tilde y}+2m) z^2 + \frac13z^4}{4z}.
  \end{align}
  Changing variables $w\mapsto iw$, the $w$ integral is another Gaussian integral and we
  get
  \begin{align}
    \Psi_{m}(x,y)&=\frac{1}{4\pi\I}\int_{\Gamma_{3a}}dz\,z\, e^{z^3/12-z({\tilde
        x}+{\tilde y}+2m)/2} =\frac{4^{2/3}}{4\pi\I}\int_{\Gamma_{4^{-1/3}3a}}dz\,
    z\,e^{z^3/3-2^{-1/3}z({\tilde x}+{\tilde y}+2\tilde
      m)}\\
    &=-2^{1/3}\Ai'\!\big(x+y+4^{1/3}m)\big)=-2^{-1/3}\p_mB_{4^{1/3}m}(x,y),
  \end{align}
  where we have used \eqref{eq:contour}. Using this in the definition of $\gamma(t,m)$ we
  deduce that
  \[\int_{-\infty}^\infty
  dt\,\gamma(t,m)=-2^{1/3}\tr\!\left[\left(I-P_0B_mP_0\right)^{-1}\Psi_{m}\right]
  =-\tr\!\left[\left(I-P_0B_mP_0\right)^{-1}\p_mB_m\right].\] Consequently we get from
  \eqref{eq:eff} and \eqref{eq:GOE} that
  \[\int_{-\infty}^\infty
  dt\,f(t,m)=-\tr\!\left[\left(I-P_0B_mP_0\right)^{-1}\p_mB_m\right]
  \det\!\big(I-P_0B_mP_0\big) =\frac{d}{dm}\det\!\big(I-P_0B_mP_0\big),\] where the last
  inequality follows from Lemma \ref{lem:derDet}. The result now follows from
  \eqref{eq:GOE}.
\end{proof}

\appendix

\section{Technical estimates}\label{sec:extras}

Section \gref{sec:aiL} contains a short review of some general facts about trace class and
Hilbert-Schmidt operators and Fredholm determinants. In Section \ref{sec:deriv} of the
present article we used some additional facts, which we state next. Here $\ch$ will denote
a separable Hilbert space and $\cb_1(\ch)$ will denote the space of \emph{trace class
  operators} in $\ch$, which is endowed with the trace norm (see Section \gref{sec:aiL}
for a short discussion or \cite{simon} for a complete treatment).

\begin{lem}\label{lem:morefredholm}
  \mbox{} Assume $\big\{\!A(v)\big\}_{v\geq0}$ is a family of operators converging as
  $v\to\infty$ in $\cb_1(\ch)$ to some operator $A\in\cb_1(\ch)$. Then:
  \begin{enumerate}[label=(\alph*),itemsep=4pt]
  \item $\displaystyle \tr(A(v))\xrightarrow[v\to\infty]{}\tr(A)$.
  \item If $I-A(v)$ is invertible for all large enough $v$ and $I-A$ is also invertible,
    then
    \[(I-A(v))^{-1}\xrightarrow[v\to\infty]{}(I-A)^{-1}\quad\text{in }\cb_1(\ch).\]
  \end{enumerate}
\end{lem}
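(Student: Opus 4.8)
The plan is to deduce both statements from two standard facts about the trace norm: first, that $|\tr(B)|\le\|B\|_{\cb_1(\ch)}$ for any $B\in\cb_1(\ch)$, and second, that $\cb_1(\ch)$ is a two-sided ideal in the bounded operators with $\|BC\|_{\cb_1}\le\|B\|_{\cb_1}\|C\|_{\mathrm{op}}$ and $\|BC\|_{\cb_1}\le\|B\|_{\mathrm{op}}\|C\|_{\cb_1}$ (both are recalled in Section \gref{sec:aiL}, or see \cite{simon}). Part (a) is then immediate: $|\tr(A(v))-\tr(A)|=|\tr(A(v)-A)|\le\|A(v)-A\|_{\cb_1(\ch)}\to0$.

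For part (b), I would first observe that convergence in $\cb_1(\ch)$ implies convergence in operator norm, since $\|\cdot\|_{\mathrm{op}}\le\|\cdot\|_{\cb_1(\ch)}$. Because $I-A$ is invertible with bounded inverse, and $\|(I-A(v))-(I-A)\|_{\mathrm{op}}\to0$, a Neumann-series argument gives that $I-A(v)$ is invertible for all large $v$ (consistent with the hypothesis) and that $(I-A(v))^{-1}\to(I-A)^{-1}$ in operator norm, with the operator norms $\|(I-A(v))^{-1}\|_{\mathrm{op}}$ uniformly bounded by some constant $C$ for large $v$. Now I would write the resolvent identity
\[
(I-A(v))^{-1}-(I-A)^{-1}=(I-A(v))^{-1}\big(A(v)-A\big)(I-A)^{-1},
\]
and estimate its trace norm using the ideal property twice:
\[
\big\|(I-A(v))^{-1}-(I-A)^{-1}\big\|_{\cb_1(\ch)}
\le\|(I-A(v))^{-1}\|_{\mathrm{op}}\,\|A(v)-A\|_{\cb_1(\ch)}\,\|(I-A)^{-1}\|_{\mathrm{op}},
\]
which is bounded by $C\,\|(I-A)^{-1}\|_{\mathrm{op}}\,\|A(v)-A\|_{\cb_1(\ch)}\to0$. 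This proves the claimed convergence in $\cb_1(\ch)$.

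The only genuinely nontrivial point — rather than the algebra, which is routine — is establishing the uniform bound on $\|(I-A(v))^{-1}\|_{\mathrm{op}}$ for large $v$; but this follows cleanly from the operator-norm convergence $A(v)\to A$ together with invertibility of $I-A$: choosing $v$ large enough that $\|A(v)-A\|_{\mathrm{op}}\le\tfrac{1}{2}\|(I-A)^{-1}\|_{\mathrm{op}}^{-1}$, the Neumann series for $(I-A(v))^{-1}=\big(I-(I-A)^{-1}(A(v)-A)\big)^{-1}(I-A)^{-1}$ converges and is bounded by $2\|(I-A)^{-1}\|_{\mathrm{op}}$. I expect no real obstacle here; the lemma is a packaging of standard perturbation theory in the trace ideal, included because it is applied repeatedly in Section \ref{sec:deriv}.
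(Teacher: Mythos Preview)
Your argument is correct. The paper itself does not give a proof of this lemma; it simply cites Theorem~3.1 and Corollary~5.2 of \cite{simon}. Your self-contained derivation via the resolvent identity and the ideal property of $\cb_1(\ch)$ is the standard elementary route and is perfectly adequate here. One small point worth making explicit: the operators $(I-A(v))^{-1}$ and $(I-A)^{-1}$ are not themselves trace class, so the statement ``$(I-A(v))^{-1}\to(I-A)^{-1}$ in $\cb_1(\ch)$'' should be read as saying that their \emph{difference} lies in $\cb_1(\ch)$ and tends to zero in trace norm. Your resolvent identity shows exactly this, since $A(v)-A\in\cb_1(\ch)$ is sandwiched between bounded operators; you might add a sentence flagging this interpretation.
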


This result comes from Theorem 3.1 and Corollary 5.2 in \cite{simon}. Using (5.1) from
\cite{simon} one can also easily show the following (see also the corollary just cited):

\begin{lem}\label{lem:derDet}
  Assume $\big\{\!A(\beta)\big\}_{\beta\in[0,1)}$ is a family of operators in $\cb_1(\ch)$
  such that there is an operator $\p_\beta A(0)$ satisfying
  \[\frac1\beta\left[A(\beta)-A(0)\right]\xrightarrow[\beta\to0]{}\p_\beta
  A(0)\quad\text{in }\cb_1(\ch).\] Then the map
  $\beta\longmapsto\det\!\big(I+A(\beta)\big)$ is differentiable at 0 and
  \[\p_\beta\det\!\big(I+A(\beta)\big)\Big|_{\beta=0}
  =\tr\!\left[(I+A(0))^{-1}\p_\beta A(0)\right]\det\!\big(I+A(0)\big).\]
\end{lem}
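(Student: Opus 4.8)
The plan is to reduce the statement to the differentiability of the Fredholm determinant at the identity, and then to package that with the multiplicativity $\det(BC)=\det(B)\det(C)$ for operators of the form $I+(\text{trace class})$. Write $A_0=A(0)$ and $D=\p_\beta A(0)$; the hypothesis says $\tfrac1\beta[A(\beta)-A_0]\to D$ in $\cb_1(\ch)$, so in particular $\|A(\beta)-A_0\|_1\to0$ as $\beta\to0$. Since the conclusion involves $(I+A_0)^{-1}$ I take it to be part of the hypothesis, as it is in all our applications, that $I+A_0$ is invertible, so that $(I+A_0)^{-1}$ is a bounded operator.

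First I would factor. Setting $E(\beta):=(I+A_0)^{-1}(A(\beta)-A_0)$, one has $I+A(\beta)=(I+A_0)(I+E(\beta))$, and since $A_0\in\cb_1(\ch)$ and $E(\beta)\in\cb_1(\ch)$ (trace class composed with a bounded operator is trace class), multiplicativity gives
\[\det\!\big(I+A(\beta)\big)=\det(I+A_0)\,\det\!\big(I+E(\beta)\big).\]
By hypothesis $\tfrac1\beta E(\beta)\to(I+A_0)^{-1}D$ in $\cb_1(\ch)$, and $\|E(\beta)\|_1\to0$. The core step is then the second-order expansion of the determinant at the identity: for $C\in\cb_1(\ch)$,
\[\big|\det(I+C)-1-\tr(C)\big|\le\|C\|_1^2\,e^{\|C\|_1}.\]
This follows from the expansion $\det(I+C)=\sum_{n\ge0}\tr(\Lambda^nC)$ together with the bound $|\tr(\Lambda^nC)|\le\|C\|_1^n/n!$ — both contained in the estimates in \cite{simon} (this is essentially (5.1) there) — by summing the tail $n\ge2$. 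Applying this with $C=E(\beta)$ and dividing by $\beta$,
\[\frac1\beta\big[\det(I+E(\beta))-1\big]=\tr\!\Big(\tfrac1\beta E(\beta)\Big)+\frac1\beta R(\beta),\qquad |R(\beta)|\le\|E(\beta)\|_1^2\,e^{\|E(\beta)\|_1}.\]
Here $|R(\beta)|/\beta\le|\beta|\,\big\|\tfrac1\beta E(\beta)\big\|_1^2\,e^{\|E(\beta)\|_1}\to0$ as $\beta\to0$, since the trace-norm factor stays bounded and $\|E(\beta)\|_1\to0$, while $\tr\!\big(\tfrac1\beta E(\beta)\big)\to\tr\!\big((I+A_0)^{-1}D\big)$ by continuity of the trace on $\cb_1(\ch)$ (indeed $|\tr(T)|\le\|T\|_1$). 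Hence $\beta\mapsto\det(I+E(\beta))$ is differentiable at $0$ with derivative $\tr\big((I+A_0)^{-1}D\big)$, and multiplying by the constant $\det(I+A_0)$ yields the claimed formula for $\p_\beta\det(I+A(\beta))\big|_{\beta=0}$.

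The only point requiring any care is the second-order estimate for $\det(I+C)$ near $C=0$; once one has the bound $|\tr(\Lambda^nC)|\le\|C\|_1^n/n!$ (or, alternatively, cites directly from \cite{simon} that $A\mapsto\det(I+A)$ is Fr\'echet differentiable on $\cb_1(\ch)$, and computes its derivative by the multiplicativity trick above), everything else is routine bookkeeping. If one prefers to bypass the exterior-power series entirely, the same $O(\|C\|_1^2)$ estimate can instead be extracted from the Lipschitz bound $|\det(I+A)-\det(I+B)|\le\|A-B\|_1\,e^{\|A\|_1+\|B\|_1+1}$ by Cauchy estimates applied to the entire function $z\mapsto\det(I+zC)$, which is bounded by $e^{|z|\,\|C\|_1}$.
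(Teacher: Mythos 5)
Your proof is correct, and complete where the paper itself offers no argument at all: the paper simply cites (5.1) of Simon's trace-ideals reference, whereas you give the standard self-contained derivation (factor out $I+A(0)$, use multiplicativity of the Fredholm determinant, and control the remainder at the identity via $|\det(I+C)-1-\tr(C)|\le\|C\|_1^2e^{\|C\|_1}$ from the exterior-power expansion). You are also right to flag that invertibility of $I+A(0)$ must be read into the hypotheses — the statement's right-hand side is meaningless without it, and it holds in every application in the paper — so no changes are needed.
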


\begin{rem}\label{rem:lemmas}
  Note that the last two lemmas assume convergence in trace norm as the hypothesis.
  Throughout Section \ref{sec:deriv} (see \eqref{grd}, \eqref{etoo} and \eqref{eq:grtpre})
  we used these results for operators of the form $e^{LH}K_{\Ai}\Phi_\eta e^{LH}K_{\Ai}$,
  where $\eta$ is some parameter and we know that $\Phi_\eta$ converges in Hilbert-Schmidt
  norm to some limit $\Phi$. As we will see in Lemma \ref{lem:deriv}, the convergence is
  in fact a bit stronger, and using this we can justify the application of the lemmas in
  Section \ref{sec:deriv}. To see why, note that if we let $\varphi(x)=1+x^2$ and define
  the multiplication operator $Mf(x)=\varphi(x)f(x)$ then by Lemma \gref{lem:fredholm} we
  have
  \begin{equation}
    \|e^{LH}K_{\Ai}(\Phi_\eta-\Phi)e^{LH}K_{\Ai}\|_1
    \leq\|e^{LH}K_{\Ai}\|_{\rm op}\|(\Phi_\eta-\Phi)M\|_2\|M^{-1}e^{LH}K_{\Ai}\|_2.\label{eq:remlem}
  \end{equation}
  By \eqref{eq:eLHK} we have, for $f\in L^2(\rr)$,
  \begin{align}
      \|e^{LH/2}K_{\Ai}f\|_2^2&
      =\int_{\rr^3}dx\,dy\,d\tilde y\int_{(-\infty,0]^2}d\lambda\,d\tilde\lambda\,
      e^{(\lambda+\tilde\lambda)L/2}\Ai(x-\lambda)\Ai(y-\lambda)f(y)\\
      &\hspace{3in}\cdot\Ai(x-\tilde\lambda)
      \Ai(\tilde y-\tilde\lambda)f(\tilde y)\\
      &=\int_{\rr^2}dy\,d\tilde y\int_{(-\infty,0]^2}d\lambda\,d\tilde\lambda
      \,e^{(\lambda+\tilde\lambda)L/2}\Ai(y-\lambda)f(y)
      \Ai(\tilde y-\tilde\lambda)f(\tilde y)\delta_{\lambda=\tilde\lambda}\\
      &=\int_{-\infty}^0d\lambda\,e^{\lambda L}Af(\lambda)^2.
  \end{align}
  Using \eqref{eq:planch} we deduce that
  $\|A\|_{\rm op}=\|A^*\|_{\rm op}=1$,  and then
  \begin{equation}\label{eq:eLHKnorm}
     \|e^{LH/2}K_{\Ai}\|_{\rm op}\leq1.
  \end{equation}
  The third norm in \eqref{eq:remlem} is also finite, thanks to \geqref{eq:sndHS}, and we
  are going to prove below the convergence $\|(\Phi_\eta-\Phi)M\|_2\to0$ in each relevant
  case.\noeqref{eq:eLHKnorm}
\end{rem}

The next result provides the missing estimates in the proof of \eqref{eq:fL}.

\begin{lem}\label{lem:deriv}
  For each fixed $\delta,L>0$, the convergences in \eqref{eq:errLambda}, \eqref{etoo} and
  \eqref{eq:grtpre} hold in Hilbert-Schmidt norm. Moreover, if we let $\varphi(x)=1+x^2$
  and define the multiplication operator $Mf(x)=\varphi(x)f(x)$, then the three
  convergences above still hold if we multiply each operator on the right by $M$.
\end{lem}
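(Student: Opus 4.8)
The plan is to reduce all three claimed convergences to explicit Hilbert--Schmidt estimates on kernels that are Gaussian in the transverse variables and exponential in the longitudinal ones, exploiting the fact that the operators $\vartheta_1$ and $\vartheta_2$ (together with their $w$-derivatives) have smooth, rapidly decaying kernels on the relevant half-lines. First I would record, from the explicit formula \eqref{eq:thetagc}, that $\wt\Theta^{h_{0,0}}_{[\ell,r]}(x,z)$ and its derivatives in $z$ are smooth in $z$ and, after the Gaussian factor $e^{-(\cdot)^2/4(r-\ell)}$ is taken into account, decay faster than any polynomial as $z\to-\infty$ along the region $z\le m+r^2$; multiplying by $e^{tz}$ (resp. $e^{-t\tilde z}$) only shifts the Gaussian and changes constants, so $\vartheta_1(x,\cdot)$, $\vartheta_2(\cdot,y)$, and $\p_w\vartheta_1(x,w)$, $\p_w\vartheta_2(w,y)$ all lie in $L^2$ uniformly on compact $x,y$ sets, with Gaussian tails in $x$ and $y$ as well. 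This is what makes every integral below absolutely convergent and controls the $M$-weighted versions, since multiplying by $\varphi(x)=1+x^2$ (or $\varphi(y)$) is harmless against a Gaussian tail.

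Next I would treat \eqref{eq:errLambda}. The difference $\Theta^{h_{\ep,\delta}}_{[-L,L]}-\Theta^{h_{0,\delta}}_{[-L,L]}-\Lambda^{\ep,\delta}_L$ is, by construction, exactly the error made by not shifting $m$ to $m+\ep$ in the two indicator functions $\uno{z\le m+t^2}$, $\uno{\tilde z\le m+(t+\delta)^2}$ coming from \eqref{eq:thetagc} applied to $\Theta^{g_{m+\ep}}_{[t,t+\delta]}$. Thus the error kernel is supported on the thin slabs $z\in(m+t^2,m+\ep+t^2]$ (and similarly for $\tilde z$), over which the smooth integrand of $\Lambda$ is bounded; hence its Hilbert--Schmidt norm is $O(\ep)$ times a finite constant, and dividing by $\ep$ and letting $\ep\to0$ gives $0$. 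The same bound with an extra factor $\varphi(x)$ or $\varphi(y)$ inside still gives $O(\ep)$ because the Gaussian tails in $x,y$ absorb the polynomial. For \eqref{etoo}, I would start from the rescaled representation of $\Lambda^{\ep,\delta}_L$ displayed just before \eqref{etoo}: the bracket $\sqrt\delta\big[e^{-u^2/4}-e^{-(u+2\ep/\sqrt\delta)^2/4}\big]$ divided by $\ep$ converges pointwise to $u\,e^{-u^2/4}$ and is dominated, uniformly in small $\ep$, by $C\,u\,e^{-u^2/8}$ (a one-line Taylor/mean-value estimate on $e^{-\cdot/4}$), so dominated convergence in the $(u,v)$ integral, combined with the uniform $L^2$ bounds on $\vartheta_1,\vartheta_2$ from the first step, upgrades the pointwise limit to a Hilbert--Schmidt limit; the $M$-weighted version is identical. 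Finally \eqref{eq:grtpre} follows by taking $\delta\to0$ in \eqref{etoo}: the $u$-integral $\tfrac1{4\sqrt\pi}\int_0^\infty du\int_{-u}^u dv\,u\,e^{-u^2/4}=1$ (a Gaussian moment computation), and as $\delta\to0$ the arguments $-\sqrt\delta(u+v)/2+m+t^2$ and $\sqrt\delta(v-u)/2+m+(t+\delta)^2$ both collapse to $m+t^2$, while the $\sqrt\delta$-scale fluctuation is precisely what turns $\vartheta_1,\vartheta_2$ evaluated near $w=m+t^2$ into their $w$-derivatives at $w=m+t^2$; making this rigorous is a first-order Taylor expansion in $\delta$ with remainder controlled, again, by the Gaussian/exponential decay, and the Hilbert--Schmidt (and $M$-weighted) convergence follows by dominated convergence.

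The main obstacle is the $\delta\to0$ step for \eqref{eq:grtpre}: one must show not merely pointwise convergence of kernels but convergence of the rank-one kernel $\Lambda^{\ep,\delta}_L/\ep$ to $\p_w\vartheta_1(x,w)|_{w=m+t^2}\,\p_w\vartheta_2(w,y)|_{w=m+t^2}$ in Hilbert--Schmidt norm as $\delta\to0$, and this requires a genuine uniform-in-$\delta$ domination of the $w$-difference quotients of $\vartheta_1,\vartheta_2$ by an $L^2$ function with Gaussian tails in $x$ and $y$ that also survives multiplication by $\varphi$. That domination is available because differentiating \eqref{eq:thetagc} in its second argument produces only polynomial prefactors times the same Gaussian, and the longitudinal exponentials $e^{\mp Lz}$ etc. are, for fixed $L$, bounded on the support; I would isolate this as a short sublemma (``uniform $L^2$ bounds on $\vartheta_i$ and their $w$-derivatives near $w=m+t^2$, with and without the weight $M$'') and then feed it into the dominated-convergence arguments for all three displays at once. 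Everything else is bookkeeping with Gaussian integrals.
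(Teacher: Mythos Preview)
Your treatment of \eqref{etoo} is fine and matches the paper's dominated-convergence argument. But there are genuine gaps in the other two parts.

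For \eqref{eq:errLambda} your logic is simply wrong: you say the error kernel is supported on slabs of width $\ep$ with bounded integrand, ``hence its Hilbert--Schmidt norm is $O(\ep)$ times a finite constant, and dividing by $\ep$ and letting $\ep\to0$ gives $0$.'' But $O(\ep)/\ep=O(1)$, not $o(1)$; your estimate gives no conclusion. What actually makes $\ep^{-1}E_\ep\to 0$ is a cancellation you have not identified. The paper splits the support into the corner $D_1=[0,\ep]\times[\delta^2,\delta^2+\ep]$ (area $\ep^2$, so this piece is $O(\ep)$ after dividing by $\ep$) and two semi-infinite strips $D_2,D_3$ of width $\ep$. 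On the strips, $\ep^{-1}\int_0^\ep(\cdots)\,dz$ converges to the boundary value at $z=0$ (resp.\ $\tilde z=\delta^2$), and that boundary value is \emph{zero} because the bracket $e^{-(z-\tilde z+\delta^2)^2/4\delta}-e^{-(z+\tilde z-\delta^2)^2/4\delta}$ in \eqref{eq:thetagc} is antisymmetric under $\tilde z-\delta^2\mapsto -(\tilde z-\delta^2)$ and hence vanishes when one of the variables sits on the barrier. This Dirichlet-type vanishing is the whole point and your sketch misses it.

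The same omission undermines your \eqref{eq:grtpre} argument. You write the integral $\tfrac1{4\sqrt\pi}\int_0^\infty\!du\int_{-u}^u\!dv\,u\,e^{-u^2/4}=1$ and then say the $\sqrt\delta$-scale fluctuation ``turns $\vartheta_1,\vartheta_2$ into their $w$-derivatives.'' But a Taylor expansion of $\vartheta_i$ near $w=m+t^2$ has a zeroth-order term $\vartheta_i(x,m+t^2)$; if that term were nonzero the product would be $O(1)$ and dividing by $\delta$ would blow up. What saves you is again the bracket structure of \eqref{eq:thetagc}: $\vartheta_1(x,m+t^2)=\vartheta_2(m+(t+\delta)^2,y)=0$, so the leading contribution is $\p_w\vartheta_1\cdot\p_w\vartheta_2$ times $\delta\cdot\tfrac{(u^2-v^2)}{4}$, and it is \emph{that} $(u,v)$-integral, $\tfrac{1}{16\sqrt\pi}\int_0^\infty\!du\int_{-u}^u\!dv\,u(u^2-v^2)e^{-u^2/4}$, which equals $1$. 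Your displayed integral is the wrong one; the mechanism is not a difference quotient but a Taylor expansion whose constant term vanishes. State and use the boundary vanishing explicitly and both gaps close.
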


\begin{proof}
  The second equality in (\ref{etoo}) one follows from the dominated convergence theorem
  and the estimate
  \begin{equation}
    \left| \frac{\sqrt\delta}{\ep}\left[ e^{-u^2/4}-e^{-(u+2\ep/\sqrt\delta )^2/4} \right]
      - u e^{-u ^2/4} \right| \leq C \frac{\ep}{\sqrt\delta}(1+u^2)e^{-u^2/4},\label{eq:bdDer}
  \end{equation}
  where $C>0$ can be taken uniform in $u\geq0$ for small enough $\ep$. Using this bound
  and the particular form of $\vartheta_1$ and $\vartheta_2$ we can see that
  \begin{eqnarray*}
    &&\bigg| \int^{\infty}_0 du \int^u_{-u} dv \, \left\lbrace \frac{\sqrt\delta}{\ep}\left[ e^{-u^2/4}-e^{-(u-2\ep/\sqrt\delta )^2/4} \right]
      - u e^{-u ^2/4} \right\rbrace   \\
    && \hspace{2.5cm} \cdot \,  \vartheta_1(x,-\sqrt\delta(u+v)/2+c+t^2) \, \vartheta_2(\sqrt\delta(v-u)/2+m+(t+\delta)^2,y) \bigg| \\
    && \hspace{1cm}\leq C
    \frac{\ep}{\sqrt\delta}e^{C(|x|+|y|)-\frac{x^2+y^2}{C}},
  \end{eqnarray*} 
  for some $C>0$. Integrating the square of the left side with respect to $x$ and $y$ over
  $(-\infty,m+L^2]^2$, we can deduce again by the dominated convergence theorem that
  $\ep^{-1}\Lambda^{\ep,\delta}_L$ converges in Hilbert-Schmidt norm. This, together with
  \eqref{eq:errLambda}, proves (\ref{etoo}).

  Next we observe that
  \begin{multline}
    \Big|\vartheta_1(x,-\sqrt\delta(u+v)/2+m+t^2)\vartheta_2(\sqrt\delta y(v-u)/2+m+(t+\delta)^2,y) \\
    +\frac{\delta}{4}(u+v)(v-u) \partial_w\vartheta_1(x,w)|_{w=m+t^2}\,\partial_w\vartheta_2(w,y)|_{w=m+t^2}\Big|
    \leq \delta^{3/2} \, e(u,v,x,y),
  \end{multline}
  where $e$ involves products of first and second derivatives of $\vartheta_1$ and
  $\vartheta_2$. By the same argument we explained above, $\int du \int dv \,
  |e(u,v,x,y)|$ can be easily seen to be in $L^2((-\infty,m+L^2]^2)$ as a function of $x$
  and $y$. Thus, by the dominated convergence theorem,
  \begin{multline}
    \lim_{\delta\to0}\frac{1}{\delta}\left[\p_\beta\Theta^{h_{\beta,\delta}}_{[-L,L]}\right]_{\beta=0}(x,y) \\
    =-\frac{1}{4\sqrt{\pi}} \int^{\infty}_0 du \int^u_{-u} dv \, u \,(u+v)(v-u) \, e^{-u
      ^2/4} \, \frac14 \partial_w \vartheta_1(x,w)|_{w=m+t^2} \, \partial_w
    \vartheta_2(w,y)|_{w=m+t^2}
  \end{multline}
  in $L^2((-\infty,m+L^2]^2)$. The integral in $u$ and $v$ can be computed, and gives the
  answer $-16\sqrt{\pi}$, so we deduce that
  \[\frac{1}{\delta}\left[\p_\beta\Theta^{h_{\beta,\delta}}_{[-L,L]}\right]_{\beta=0}(x,y)
  \xrightarrow[\delta\to0]{}\partial_w \vartheta_1(x,w)|_{w=m+t^2} \, \partial_w
  \vartheta_2(w,y)|_{w=m+t^2}\] in Hilbert-Schmidt norm. This proves \eqref{eq:grtpre}.

  We are left with proving \eqref{eq:errLambda}. Let
  $E_\ep=\big(\Theta^{h_{\ep,\delta}}_{[-L,L]}-\Theta^{h_{0,\delta}}_{[-L,L]}\big)
  -\Lambda^{\ep,\delta}_L$. To simplify notation we assume $m=t=0$, for the general case
  the proof is exactly the same. From \eqref{eq:thetagc} and \eqref{eq:defLambda} we have
  \begin{equation}
    E_\ep(x,y)=\frac{1}{\sqrt{4\pi\delta}}e^{\delta^3/3}\int_Ddz\,d\tilde z\,\vartheta_1(x,z)\!
    \left[e^{-(z-\tilde z+\delta^2)^2/(4\delta)}-e^{-(z+\tilde z-\delta^2-2\ep)^2/(4\delta)}\right]\!\vartheta_2(\tilde z,y),
  \end{equation}
  where
  $D=\big((-\infty,\ep]\times(-\infty,\ep+\delta^2]\big)\setminus\big((-\infty,0]\times(-\infty,\delta^2]\big)$. We
  split $D$ into the union of three disjoint regions of pairs $(z,\tilde z)$: $D_1=\{0\leq
  z\leq\ep,\delta^2\leq\tilde z\leq\delta^2+\ep\}$, $D_2=\{0\leq z\leq\ep,\tilde z<0\}$
  and $D_3=\{z<0,\delta^2\leq\tilde z\leq\delta^2+\ep\}$. Similarly we split $E_\ep$ as
  the sum of the integrals $E^i_\ep$ over each region. On the first region we have
  \[\frac{1}{\ep}\left|E^1_\ep(x,y)\right|\leq\frac{2}{\sqrt{4\pi\delta}}e^{\delta^3/3}\frac{1}{\ep}
  \int_{D_1}dz\,d\tilde z\,\vartheta_1(x,z) \vartheta_2(\tilde
  z,y)\xrightarrow[\ep\to0]{}0\] thanks to the particular form of $\vartheta_1$ and
  $\vartheta_2$ and the fact that $D_1$ has area
  $\ep^2$.  
  For the second region we have
  \[\lim_{\ep\to0}\frac{1}{\ep}E^2_\ep(x,y)=\frac{e^{\delta^3/3}}{\sqrt{4\pi\delta}}
  \int_{-\infty}^0d\tilde z\!\left.\vartheta_1(x,z)\!  \left[e^{-(z-\tilde
        z+\delta^2)^2/(4\delta)}-e^{-(z+\tilde
        z-\delta^2)^2/(4\delta)}\right]\!\vartheta_2(\tilde z,y)\right|_{z=0}=0,\] while
  the third region can be dealt with analogously. We deduce by the triangle inequality
  that $\ep^{-1}|E_\ep(x,y)|\to0$ as $\ep\to0$.  To upgrade the convergence to
  Hilbert-Schmidt norm we may use the dominated convergence theorem and similar estimates
  as for (b) and (c), we omit the details. This finishes the proof of (a).

  Finally, it is straightforward to check in each case that the convergences still hold if
  we multiply each kernel by the polynomial $1+y^2$.
\end{proof}

\begin{proof}[Proof of \eqref{elltoo}]
  By Lemma \grefn{lem:fredholm} and \eqref{eq:eLHKnorm} we have
  \[\|e^{LH}K_{\Ai}P_{m+L^2}\varphi\|_2\leq\|e^{LH}K_{\Ai}\|_{\rm
    op}\|P_{m+L^2}\varphi\|_2\leq\|P_{m+L^2}\varphi\|_2.\]
  This last norm can be easily computed:
  \begin{align}
    \|P_{m+L^2}\varphi\|_2^2
    &= \frac{1}{16 \pi (L+t)^3} e^{\frac23 L^3 + \frac23 t^3} \int^{\infty}_{m+L^2} dz\, (z-m+L^2)^2e^{-\frac{(z-m-L^2)^2}{2(L+t)}-2Lz}\\
    &= \frac{1}{16 \pi (L+t)^3} e^{-\frac43 L^3 + \frac23 t^3} \int^{\infty}_{m} dz\,
    (z-m)^2e^{-\frac{(z-m)^2}{2(L+t)}-2Lz}.
  \end{align}
  Let $F_L(z)$ denote the argument in the last exponential. $F_L$ is minimized at
  $z^*=m-2L(L+t)$, which is less than $m$ for large $L$, and is strictly increasing in
  $[z^*,\infty)$. Thus $F_L$ attains its minimum inside the interval $[m,\infty)$ at
  $z=m$, where its value is $-2mL$. An application of Laplace's method (Lemma
  \gref{lem:laplace}) then shows that
  \[\|P_{m+L^2}\varphi\|_2^2 \leq Ce^{-L^ 3/C},\] for some $C>0$, which finishes the
  proof.
\end{proof}

\printbibliography[heading=apa]

\end{document}